\theoremstyle{plain}
\newtheorem{theorem}{Theorem}[section]
\newtheorem{corollary}[theorem]{Corollary}
\newtheorem{lemma}[theorem]{Lemma}
\newtheorem{claim}{Claim}
\newtheorem{definition}[theorem]{Definition}
\newtheorem{rk}{Remark}[section]
\newcommand{\tr}{{\rm tr}}
\numberwithin{equation}{section}
\begin{document}

\title{Mixing Rates of Random Walks with Little Backtracking}

\author{Sebastian M. Cioab\u{a}}
\address{Department of Mathematical Sciences, University of Delaware, Newark, DE 19716-2553, USA.}
\email{cioaba@udel.edu}
\thanks{The author Sebastian M. Cioab\u{a} was supported by the National Security Agency grant H98230-13-1-0267.}

\author{Peng Xu}
\address{Department of Mathematical Sciences, University of Delaware, Newark, DE 19716-2553, USA.}
\email{xpeng@udel.edu}
\thanks{The author Peng Xu was supported by the U.S. National Science
Foundation through grants DMS-1409504 and CCF-1346564.}

\subjclass{Primary 05C81, 05E30, 15A18; Secondary 60B10, 60C05, 	60G99, 60J10}
\date{March 25, 2015 and, in revised form, May 5, 2015.}


\keywords{Regular graph, cliques, random walk, mixing rate, eigenvalues}

\begin{abstract}
Many regular graphs admit a natural partition of their edge set into cliques of the same order such that each vertex is contained in the same number of cliques. In this paper, we study the mixing rate of certain random walks on such graphs and we generalize previous results of Alon, Benjamini, Lubetzky and Sodin regarding the mixing rates of non-backtracking random walks on regular graphs.
\end{abstract}

\maketitle

%
.

\section{Introduction}

Let $G=(V,E)$ be a connected, non-complete and non-bipartite graph. Assume that its edges are partitioned into a set of cliques $\mathcal{K}$ such that each clique in $\mathcal{K}$ has the same order $l$ and each vertex of $G$ is contained in precisely $d$ cliques from $\mathcal{K}$. Obviously, $|\mathcal{K}|=\frac{d|V|}{l}$ and $G$ is $d(l-1)$-regular. When $l=2$, this is equivalent with $G$ being $d$-regular and $\mathcal{K}$ is just the set of edges of $G$.

Let $\epsilon \in [0,1/d]$ be a fixed constant. In this paper, we study the mixing rate of the following random walk $W_\epsilon$ on the vertices of $G$. Start with an arbitrary vertex. In the first step, the current vertex picks one of its $d(l-1)$ neighbors uniformly at random. In each subsequent step, the walk can stay in the same clique (from $\mathcal{K}$) as the most recent used edge with probability $\epsilon$ by uniformly choosing one of the $l-1$ neighbors in the current clique, or else it can leave the clique containing the most recent edge with probability $1-\epsilon$ by uniformly choosing one of its remaining $(d-1)(l-1)$ neighbors. After the first step, the probability of choosing a neighbor in the current clique is $p_s:=\frac{\epsilon}{l-1}$ (we call $p_s$ the {\em staying probability}) and the probability of choosing a neighbor in a different clique is $p_l:=\frac{1-\epsilon}{(d-1)(l-1)}$ (we call $p_l$ the {\em leaving probability}).

When $l=2$ and $\epsilon=1/d$, $W_{\epsilon}$ is the usual random walk on the vertices of a regular graph $G$ whose behavior is well studied \cite{AF, L}. In particular, it is known that the mixing rate (see Definition \ref{defmixrate}) of such random walk is $\rho=\frac{\max (|\lambda_2|, |\lambda_n|)}{d(l-1)}$ (see \cite[Corollary 5.2]{L}).

\begin{rk}\label{def:CliquewiseNBRW}
When $\epsilon=0$ and $l\ge 2$, $W_{\epsilon}$ is what we call a {\em cliquewise} non-backtracking random walk on $G$. This means that in each step, the walk cannot stay in the same clique it came from. i.e. $W_0^{(k)}$ is the set of $(w_0,w_1,\cdots,w_k)$ such that $w_t\in V$, $w_{t-1}w_t\in E$ for all $t\in [k]$, $w_{t-1}\neq w_{t+1}$; $w_{t-1}w_t$ and $w_tw_{t+1}$ cannot be both in the same clique from $\mathcal{K}$. Furthermore when $\epsilon=0$ and $l=2$, $W_{\epsilon}$ is a non-backtracking random walk on $G$ whose behavior has been studied extensively in recent years\cite{ABLS,AL,FH,KMMNSZZ}. In particular, Theorem 1.1 in \cite{ABLS} will be a special case of our Theorem \ref{main1}.
\end{rk}

Define the $k$-steps transition probability of $W_{\epsilon}$ as follows:
$$
\widetilde{P}_{uv}^{(k)}:=\mathbb{P}(X_k=v|X_0=u).
$$

\begin{definition}\label{defmixrate}
The mixing rate of this random walk with respect to the uniform distribution is defined by 
$$
\widetilde{\rho}(G):=\limsup_{k\rightarrow\infty}\max_{u,v\in V}\Big|\widetilde{P}_{uv}^{(k)}-\frac{1}{n}\Big|^{1/k}.
$$
\end{definition}
Note that if $\widetilde{\rho}(G)<1$, then the $k$-steps transition probability distribution converges to uniform distribution as $k\rightarrow\infty$ (i.e. the total variance of the probability distribution $\widetilde{P}_{uv}^{(k)}$ and uniform distribution decreases exponentially). 

Define $\psi:[0,\infty)\rightarrow\mathbb{R}$ by:
\begin{equation}\label{LimitFn}
\psi(x):=\begin{cases}  
1        &\mbox{if } 0\le x\le 1\\
x+\sqrt{x^2-1} &\mbox{if } x\ge 1
\end{cases}
\end{equation}

Let $d(l-1)=\lambda_1\ge \lambda_2\ge \lambda_3\ge \cdots\ge \lambda_n$ be the eigenvalues of the adjacency matrix of $G$. Let $N$ be the vertex-clique incidence matrix of $G$ corresponding to the clique partition $\mathcal{K}$. The rows of $N$ are indexed by the vertices of $G$ and the columns are indexed by the cliques in $\mathcal{K}$. For any $x\in V(G)$ and $K\in \mathcal{K}$, $N(x,K)=1$ if $x$ is contained in $K$ and $0$ otherwise. It is straightforward that the adjacency matrix of $G$ equals $NN^{t}-dI$. This implies that $\lambda_n\geq -d$. 

The following are the main results of our paper.

\begin{theorem}\label{main1}
Let $d\geq 2, l\geq 2$ be two integers and $\epsilon\in [0,1/d)$. Denote $\delta:=\frac{\epsilon(d-1)}{1-\epsilon}$. Let $G$ 
and $W_\epsilon$ be the $d(l-1)$-regular graph and the random walk defined above, respectively.

\begin{enumerate}

\item If $l(1-\delta)\le d$ with $d\ge 3$ and $l\ge 2$, then $W_\epsilon$ converges to the uniform distribution, and its mixing rate, $\tilde{\rho}$, satisfies:
\begin{eqnarray}\label{LimitFnless}
\tilde{\rho}=\sqrt{\frac{1-\delta}{(d-1+\delta)(l-1)}}\psi\Big(\frac{\lambda}{2\sqrt{(l-1)(1-\delta)(d-1+\delta)}}\Big)
\end{eqnarray}
where $\lambda:=\max_{i=2,n}|\lambda_i-(l-2)(1-\delta)|$. Moreover, if we treat $\tilde{\rho}$ as a function of $\epsilon$ on $[0,1/d]$, then $\tilde{\rho}(\epsilon)$ is continuous on $[0,1/d]$. 

\item If $l(1-\delta)>d$ with $d\ge 2$ and $l\ge 2$, then $W_\epsilon$ converges to the uniform distribution, and its mixing rate, $\tilde{\rho}$, satisfies: 
\begin{eqnarray}\label{LimitFnGreater}
\tilde{\rho}=\sqrt{\frac{1-\delta}{(d-1+\delta)(l-1)}}\psi\Big(\frac{\hat{\lambda}}{2\sqrt{(l-1)(1-\delta)(d-1+\delta)}}\Big)
\end{eqnarray}
where $\hat{\lambda}:=\max_{i:2\leq i\leq n;~\lambda_i\neq-d}|\lambda_i-(l-2)(1-\delta)|.$

\end{enumerate}
\end{theorem}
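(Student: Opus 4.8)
The plan is to reduce the analysis of $W_\epsilon$ to a spectral problem on an auxiliary operator acting on directed edges (or, more precisely, on (vertex, clique)-incidences), in the spirit of the Hashimoto non-backtracking operator used in \cite{ABLS}. First I would set up the state space: since the walk at time $k\ge 1$ remembers both its current vertex and the clique containing the most recent edge, the natural object is the space of \emph{incidences} $(x,K)$ with $x\in K\in\mathcal{K}$, and the transition rule of $W_\epsilon$ induces a linear operator $B_\epsilon$ on functions on these incidences: from $(x,K)$ one moves to $(y,K)$ with weight $p_s$ for each of the $l-1$ other vertices $y\in K$, and to $(y,K')$ with weight $p_l$ for each vertex $y$ in a clique $K'\ni y$ with $K'\neq K$. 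The transition probabilities $\widetilde P^{(k)}_{uv}$ are then recovered by projecting down through the incidence structure (the matrix $N$), so that $\widetilde\rho(G)$ is governed by the spectral radius of $B_\epsilon$ restricted to the orthogonal complement of the stationary direction, together with possible Jordan-block contributions.

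The key computational step is to diagonalize $B_\epsilon$ in terms of the adjacency spectrum of $G$. Using the identity $A = NN^t - dI$ and writing $B_\epsilon$ in block form with respect to the decomposition ``move within the clique'' versus ``move to a new clique,'' I expect $B_\epsilon$ to satisfy a quadratic matrix relation: each eigenvalue $\mu$ of $B_\epsilon$ on the non-trivial part should be a root of a quadratic whose coefficients are linear in an eigenvalue of a naturally associated symmetric matrix. Concretely, one should be able to show that the ``interesting'' eigenvalues of $B_\epsilon$ come in pairs $\mu_\pm$ with
\begin{equation}\label{eq:quadratic}
\mu^2 - \frac{(1-\delta)\,(\lambda_i - (l-2)(1-\delta))}{(d-1+\delta)(l-1)}\cdot\frac{1}{?}\,\mu + \frac{1-\delta}{(d-1+\delta)(l-1)} = 0,
\end{equation}
where the normalization is chosen so that $\mu_+\mu_- = \frac{1-\delta}{(d-1+\delta)(l-1)}$; the parameter $\delta=\frac{\epsilon(d-1)}{1-\epsilon}$ enters precisely because $p_s/(p_s + (d-1)p_l)$ rewrites in terms of $\delta$, and the shift by $(l-2)(1-\delta)$ appears because an incidence has $l-2$ ``stay'' options besides backtracking. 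Whether $|\mu_\pm|$ is the geometric mean $\sqrt{\mu_+\mu_-}$ (when the discriminant is negative, i.e. $|\lambda_i - (l-2)(1-\delta)| \le 2\sqrt{(l-1)(1-\delta)(d-1+\delta)}$) or the larger root $|\mu_+|$ (when it is non-negative) is exactly what produces the function $\psi$: indeed $\sqrt{\mu_+\mu_-}\cdot\psi(\,\cdot\,)$ with the displayed argument interpolates between these two regimes. So the proof of each formula amounts to: (a) establish \eqref{eq:quadratic} with the correct coefficients; (b) identify which $\lambda_i$ contribute—all $i\ge 2$ in part (1), and all $i\ge 2$ with $\lambda_i\neq -d$ in part (2)—and (c) read off $\widetilde\rho$ as the maximum modulus over contributing eigenvalues.

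The main obstacles are the two extremal/degenerate cases. First, the eigenvalue $\lambda_n = -d$ (which occurs exactly when $N$ has a nontrivial kernel, i.e. some clique-incidence vector relation): here the corresponding block of $B_\epsilon$ degenerates, the quadratic \eqref{eq:quadratic} loses a root, and one must check separately that the leftover eigenvalue of $B_\epsilon$ is small enough not to affect $\widetilde\rho$—this is why part (2) excludes $\lambda_i = -d$ and why the hypothesis $l(1-\delta) > d$ versus $l(1-\delta)\le d$ splits the two cases (the inequality compares the degenerate eigenvalue's modulus against $\sqrt{\mu_+\mu_-}$). Second, I must handle non-diagonalizability: when the discriminant vanishes $B_\epsilon$ may have Jordan blocks, so $\widetilde P^{(k)}_{uv} - 1/n$ could carry a polynomial-in-$k$ factor; since Definition~\ref{defmixrate} takes a $k$-th root and $\limsup$, such polynomial factors disappear and the stated $\widetilde\rho$ is still correct, but this needs to be remarked. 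Finally, for the continuity claim in part (1), I would check that as $\epsilon \to 1/d$ (equivalently $\delta \to 1$, but one must be careful: $\delta = \frac{(d-1)/d}{1 - 1/d} = 1$ at $\epsilon = 1/d$) the formula \eqref{LimitFnless} limits to $\frac{\max(|\lambda_2|,|\lambda_n|)}{d(l-1)}$, matching the known mixing rate of the ordinary random walk quoted from \cite[Corollary 5.2]{L}; continuity at interior points is immediate from continuity of $\psi$ and of the coefficients in $\epsilon$, with the only subtlety being the transition across the threshold $l(1-\delta)=d$, where formulas \eqref{LimitFnless} and \eqref{LimitFnGreater} must be checked to agree.
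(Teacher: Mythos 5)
Your route --- linearizing $W_\epsilon$ as an operator $B_\epsilon$ on vertex--clique incidences and extracting its spectrum from an Ihara--Bass type quadratic in the adjacency eigenvalues --- is genuinely different from the paper's, which never introduces a non-backtracking operator: it instead counts $\delta$-weighted walks on the bipartite vertex--clique incidence graph, derives the three-term recurrence $R^{(k+1)}=R^{(k)}R-(l-2)(1-\delta)R^{(k)}-(l-1)(1-\delta)(d-1+\delta)R^{(k-1)}$, and solves it in closed form with Chebyshev polynomials of the second kind, so that $P^{(k)}$ is an explicit polynomial in the symmetric matrix $R$. Your quadratic is exactly the characteristic equation of that recurrence: after normalization the coefficient of $\mu$ is $\frac{\lambda_i-(l-2)(1-\delta)}{(d-1+\delta)(l-1)}$, so your undetermined ``$?$'' equals $1-\delta$, and the constant term $\frac{1-\delta}{(d-1+\delta)(l-1)}$ is as you guessed. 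Your picture makes it more transparent where $\psi$ and the excluded eigenvalue $\lambda_i=-d$ come from; the paper's buys automatic diagonalizability (no Jordan blocks can arise, since $P^{(k)}$ is symmetric) and an immediate two-sided bound $\frac{\mu(k)}{n}\le\max_{u,v}\big|\widetilde P^{(k)}_{uv}-\frac1n\big|\le\mu(k)$ via the trace.

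As written, however, the proposal has three genuine gaps. First, the entire computational content of the theorem is the quadratic with its exact coefficients, and you only say you ``expect'' it to hold, leaving one coefficient literally as ``$?$''; without deriving it (e.g.\ from a two-step recurrence on incidences together with $A=NN^t-dI$) nothing is proved. Second, you argue only the upper-bound direction. To get equality in \eqref{LimitFnless}--\eqref{LimitFnGreater} you must show (i) that the candidate eigenvalues of $B_\epsilon$ are not annihilated by the lifting/projection maps relating $B_\epsilon^{k-1}$ to $\widetilde P^{(k)}$, and (ii) that $\limsup_k|\cdot|^{1/k}$ actually attains the claimed value when the quadratic has complex-conjugate roots of equal modulus, where the entries oscillate like $\sin(k\theta)$ and can be anomalously small along subsequences; the paper needs a separate equidistribution argument (its Lemma \ref{mainlemma}, proved in the Appendix via density of $\{e^{ik\theta}\}$) precisely for this, and your plan to ``read off $\widetilde\rho$ as the maximum modulus'' glosses over it. Third, the assertion that $W_\epsilon$ converges to the uniform distribution requires verifying $\tilde\rho<1$, which is not automatic from the formula: it uses $\lambda_n\ge -d$ (from $A=NN^t-dI$), connectivity (so $\lambda_2<d(l-1)$), and non-bipartiteness when $l=2$; your proposal never addresses this part of the statement.
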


By taking $\epsilon=0$ in the previous theorem, we obtain the following results.
\begin{corollary}\label{maincor1}
Let $d\geq 2$ and $l\geq 2$ be two integers and let $G$ be a connected and non-bipartite $d(l-1)$-regular graph defined above. Let $\lambda:=\max_{i=2,n}|\lambda_i-(l-2)|$ and $\hat{\lambda}:=\max_{i:2\leq i\leq n;~\lambda_i\neq -d}|\lambda_i-(l-2)|$.
\begin{enumerate}

\item If $d\geq l$ with $d\ge 3$ and $l\ge 2$, then a cliquewise non-backtracking random walk as defined in Remark \ref{def:CliquewiseNBRW} on $G$ converges to the uniform distribution, and its mixing rate, $\widetilde{\rho}$, satisfies:
\begin{equation}\label{dlarger}
\widetilde{\rho}=\frac{1}{\sqrt{(d-1)(l-1)}}\psi\Big(\frac{\lambda}{2\sqrt{(d-1)(l-1)}}\Big)
\end{equation}

\item If $d<l$, with $d\ge 2$ and $l\ge 3$, then a cliquewise non-backtracking random walk as defined in Remark \ref{def:CliquewiseNBRW} on $G$ converges to the uniform distribution, and its mixing rate, $\tilde{\rho}$, satisfies:
\begin{equation}\label{dsmaller}
\widetilde{\rho}=\frac{1}{\sqrt{(d-1)(l-1)}}\psi\Big(\frac{\hat{\lambda}}{2\sqrt{(d-1)(l-1)}}\Big)
\end{equation}
\end{enumerate}
\end{corollary}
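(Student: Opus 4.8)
The plan is to obtain Corollary \ref{maincor1} directly from Theorem \ref{main1} by specializing to $\epsilon = 0$. The first step is to observe that when $\epsilon = 0$ we have $\delta = \frac{\epsilon(d-1)}{1-\epsilon} = 0$, the staying probability $p_s = \frac{\epsilon}{l-1}$ vanishes, and so $W_0$ is exactly the cliquewise non-backtracking random walk of Remark \ref{def:CliquewiseNBRW}: after the first step the walk never remains in the clique of the most recently used edge, choosing uniformly among the $(d-1)(l-1)$ neighbors outside it. Thus the object named in the corollary is precisely the object to which Theorem \ref{main1} applies, and it remains only to substitute $\delta = 0$ into the conclusions.

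The second step is bookkeeping on the hypotheses. With $\delta = 0$ the dichotomy $l(1-\delta) \le d$ versus $l(1-\delta) > d$ reads $d \ge l$ versus $d < l$, which are exactly the two cases of the corollary; moreover $d < l$ together with $d \ge 2$ forces $l \ge 3$, so the side conditions stated in part (2) entail no loss of generality, while the side conditions of part (1) are copied unchanged. Likewise the spectral quantities specialize: $\lambda = \max_{i=2,n}|\lambda_i - (l-2)(1-\delta)|$ becomes $\max_{i=2,n}|\lambda_i-(l-2)|$, and $\hat{\lambda} = \max_{i:\,2\le i\le n,\ \lambda_i\ne -d}|\lambda_i-(l-2)(1-\delta)|$ becomes $\max_{i:\,2\le i\le n,\ \lambda_i\ne -d}|\lambda_i-(l-2)|$, matching the definitions in the corollary.

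The third step is the substitution into the formulas. Setting $\delta = 0$ in \eqref{LimitFnless} turns the prefactor $\sqrt{\frac{1-\delta}{(d-1+\delta)(l-1)}}$ into $\frac{1}{\sqrt{(d-1)(l-1)}}$ and the argument $\frac{\lambda}{2\sqrt{(l-1)(1-\delta)(d-1+\delta)}}$ of $\psi$ into $\frac{\lambda}{2\sqrt{(d-1)(l-1)}}$, giving \eqref{dlarger}; the same computation applied to \eqref{LimitFnGreater}, with $\hat{\lambda}$ in place of $\lambda$, gives \eqref{dsmaller}. Convergence to the uniform distribution is inherited verbatim from the corresponding clause of Theorem \ref{main1}. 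I expect no real obstacle here: the corollary is a formal specialization of the theorem, and the only point deserving genuine care is verifying that $W_0$ coincides with the walk of Remark \ref{def:CliquewiseNBRW}, so that the theorem really does apply to the walk named in the statement, together with the one-line check that $d < l$ and $d \ge 2$ already imply $l \ge 3$.
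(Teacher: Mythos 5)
Your proposal is correct and is exactly the paper's argument: the authors prove Corollary \ref{maincor1} by substituting $\delta=0$ into Theorem \ref{main1}, which is the specialization you carry out (with somewhat more explicit bookkeeping of the hypotheses and of the identification of $W_0$ with the cliquewise non-backtracking walk). No further comment is needed.
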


\begin{rk}
We mention the statement {\em $\tilde{\rho}(\epsilon)$ is continuous on $[0,1/d]$} in Theorem \ref{main1} because $\tilde{\rho}=\tilde{\rho}(\epsilon)$ appearing in \eqref{LimitFnless} is a continuous function of $\delta\in [0,1)$ and hence of $\epsilon\in[0,1/d)$. On the other hand, if $\epsilon=1/d$, then $W_{1/d}$ is the simple random walk with mixing rate 
$$
\tilde{\rho}(1/d)=\frac{\max_{i=2,n}|\lambda_i|}{d(l-1)}
$$
So the statement actually means that $\tilde{\rho}(\epsilon)$ is left continuous at $\epsilon=1/d$.
\end{rk}

\begin{rk}
A special case of Corollary \ref{maincor1} is $l=2$, which is exactly the non-backtracking random walk on a $d$-regular graph $G$ defined in \cite{ABLS}. Theorem 1.1 in \cite{ABLS} will be obtained from Corollary \ref{maincor1} by taking $l=2$. 
\end{rk}	

\begin{rk}
The random walk in Theorem \ref{main1} is not necessarily a non-backtracking random walk for $0<\epsilon\le 1/d$. Because for every step, $W_\epsilon$ can choose the same clique of its last step with probability $\epsilon$, and then choose its last position with probability $1/(l-1)$. This means that we permit a ``little" backtracking in each step with probability $\epsilon/(l-1)$. However if $\epsilon=0$, we have a cliquewise non-backtracking random walk defined in Remark \ref{def:CliquewiseNBRW}.
\end{rk}

\section{Proofs of main results}

\begin{proof}[Proof of Theorem \ref{main1}]

Let $G'$ be the bipartite vertex-clique incidence graph of $G$. More precisely, $G'$ will have color classes $V=V(G)$ and $\mathcal{K}$ with $x\in V$ and $K\in \mathcal{K}$ being adjacent if and only if $x\in K$. Clearly, $G'$ is a bipartite $(d,l)$-biregular graph with each vertex in $V$ having degree $d$ and each vertex in $\mathcal{K}$ having degree $l$. A $k$-step random walk $x_0\rightarrow x_1\rightarrow\cdots\rightarrow x_k$ on $G$ is equivalent to a $2k$-step random walk $x_0\rightarrow K_0\rightarrow x_1\rightarrow K_1\rightarrow\cdots\rightarrow x_{k-1}\rightarrow K_k\rightarrow x_k$ on $G'$ such that $K_0,\dots, K_k\in \mathcal{K}$, $x_ix_{i+1}\in K_i$ for $0\leq i\leq k-1$. Also, by our setting $K_{i}=K_{i+1}$ with probability $\epsilon$, for $0\leq i\leq k-1$; and $x_i\neq x_{i+1}$ for $0\leq i\leq k-1$. 

We define the weight of walk on $G'$ to be $\delta^m$ if this walk backtracks exactly $m$ times. If $m=0$, the weight of such a walk will be $1$  and the walk is non-backtracking. If $\epsilon=0$ or $1/d$, which implies $\delta=0$ or $1$ respectively, then the weight will reduce to the number of walks. A walk in $G'$ can only backtrack on the color class corresponding to the vertices of $G$. This means that consecutive steps of the form $K\rightarrow x \rightarrow K$ are allowed, but consecutive steps of the form $x\rightarrow K\rightarrow x$ are forbidden. If $x_0\rightarrow K_0\rightarrow x_1\rightarrow K_1\rightarrow\cdots\rightarrow x_k\rightarrow K_k$ is a walk of length $2k$ in $G'$ that backtracks exactly $m$ times, this means that in the corresponding walk on $G$ with $k$ steps: $x_0\rightarrow x_1\rightarrow\cdots\rightarrow x_k$, $m$ will be number of steps the walk stays in the same clique it came from. 

Let $A$ be the adjacency matrix of $G'$, where the first rows and columns are indexed after the vertices of $G$. If $N$ is the vertex-clique incidence matrix defined before Theorem \ref{main1}, then $A=\begin{bmatrix} 0 & N\\ N^t & 0\end{bmatrix}$. Define $A^{(k)}$ as the matrix whose rows and columns are indexed by the vertices of $G'$, where $A^{(k)}_{x,y}$ equals the sum of the weights of all walks of length $k$ from $x$ to $y$ in $G'$. Let  $R^{(k)}$ be the upper left $n\times n$ principal matrix of $A^{(2k)}$ and $V^{(k)}$ be the upper left $n\times n$ principal matrix of $A^{(2k-1)}A$.

We claim that the probability transition matrix corresponding to the random walk $W_{\epsilon}$ equals 
\begin{equation}\label{eq:RepresOfProbTransMx}
P^{(k)}=\frac{R^{(k)}}{d(l-1)\big((d-1+\delta)(l-1)\big)^{k-1}}.
\end{equation}

Recall that $W_{\epsilon}$ is a random walk on the vertices of $G$ with a clique partition $\mathcal{K}$ that works as follows. After picking a neighbor at random in the first step, the walk will continue by picking a neighbor of the current vertex with the probability of choosing a neighbor in the current clique being $p_s:=\frac{\epsilon}{l-1}$ and the probability of choosing a neighbor in a different clique being $p_l:=\frac{1-\epsilon}{(d-1)(l-1)}$. Also, note that $\delta=\frac{\epsilon(d-1)}{1-\epsilon}=\frac{p_s}{p_l}$.

To prove \eqref{eq:RepresOfProbTransMx}, we only need to observe that by our definition of $\delta$, note that each walk  with $k$ steps containing exactly $m$ times staying in its previous clique is assigned a probability of 
\begin{align}\label{ProbabilitymstayTraditional}
\frac{1}{d(l-1)}p_s^{m}p_{l}^{k-1-m}&=\frac{1}{d(l-1)}\left(\frac{\epsilon}{l-1}\right)^m\left(\frac{1-\epsilon}{(d-1)(l-1)}\right)^{k-1-m}\\
&=\frac{1}{d(l-1)}\cdot \left(\frac{\epsilon(d-1)}{1-\epsilon}\right)^{m} \cdot \frac{1}{\left((d-1)(l-1)+\frac{(d-1)(l-1)\epsilon}{1-\epsilon}\right)^{k-1}}\\
&=\frac{1}{d(l-1)}\cdot \frac{\delta^m}{\left((d-1+\delta)(l-1)\right)^{k-1}}.
\end{align}

Let $U_k(x)$ be the Chebyshev polynomials of the second kind of degree $k\ge 0$:
\begin{equation}
U_k(\cos(\theta))=\frac{\sin\big((k+1)\theta\big)}{\sin\theta}
\end{equation}
It is known that these polynomials satisfy the recurrence relation $U_{k+1}(x)=2xU_k(x)-U_{k-1}(x)$ and $U_{-1}(x)=0$, $U_0(x)=1$, $U_1(x)=2x$, $U_2(x)=4x^2-1$. 

For $k\geq 1$, define
\begin{equation}\label{eq:RelationChebyshev}
q_k(x):=\sqrt{(l-1)(1-\delta)(d-1+\delta)}U_k(x)+(l-2)(1-\delta)U_{k-1}(x)-\frac{(1-\delta)\sqrt{(l-1)(1-\delta)}}{\sqrt{d-1+\delta}}U_{k-2}(x)
\end{equation}
and 
\begin{equation}\label{eq:Repressmallq}
Q_k(x):=\sqrt{\big((l-1)(1-\delta)(d-1+\delta)\big)^{k-1}}q_k\Big(\frac{x-(l-2)(1-\delta)}{2\sqrt{(l-1)(1-\delta)(d-1+\delta)}}\Big).
\end{equation}

We claim that 
\begin{equation}\label{claimofUk}
R^{(k)}=Q_k(R),
\end{equation}
where $R=R^{(1)}$, i.e. the adjacency matrix of $G$.

To prove this claim, we will first obtain a recurrence relation for the matrices $A^{(k)}$ which will enable us to find a recurrence relation for the matrices $R^{(k)}$. The result will then follow by induction on $k$.

Clearly 
\begin{eqnarray}\label{eq:1stepA}
A^{(1)}=A. 
\end{eqnarray}

If $x\neq y\in V(G')$, we have $A^{(2)}_{x,y}=A^2_{x,y}$. If $x$ is a vertex of $G$, then $A^{(2)}_{x,x}=0=A^2_{x,x}-d$. If $K$ is a clique in $\mathcal{K}$ and $y\in K$ in $G$, then the weight of the walk: $K\rightarrow y\rightarrow K$ is $\delta$. Therefore the sum of the weights of all walks from $K$ to $K$ with length 2 is $l\delta$. Thus, $A^{(2)}_{K,K}=l\delta=A^2_{K,K}-l(1-\delta)$ implying that
\begin{eqnarray}\label{eq:2stepA}
A^{(2)}=A^2-\begin{pmatrix}
dI & 0\\
0 & l(1-\delta)I
\end{pmatrix}.
\end{eqnarray}

For $k\geq 2$, we claim that
\begin{eqnarray}\label{eq:k+1stepA}
A^{(k+1)}=A^{(k)}A-A^{(k-1)}\begin{pmatrix}
(d-1+\delta)I & 0\\
0 & (l-1)(1-\delta)I
\end{pmatrix}.
\end{eqnarray}

If $*\in V(G')=V\cup \mathcal{K}$ and $y\in V$, then $A^{(k)}A_{*,y}$ equals the sums of the weights of the walks $*=w_0, w_1, \dots, w_{k}=K$ of length $k$ from $*$ to $K$, where $K\in \mathcal{K}$ ranges through all neighbors of $y$ in $G'$. If $w_{k-1}\neq y$ then $*=w_0, w_1,\dots, w_{k}=K, w_{k+1}=y$ is a walk of length $k+1$ with no backtracking at the last step and the total weight of such walks is $A^{(k+1)}_{*,y}$. If $w_{k-1}=y$, then the weight of all walks $*=w_0, w_1,\dots, w_k$ with $w_{k-1}=y$ is $(d-1+\delta)A^{(k-1)}_{*,y}$, where $d-1$ comes from the $d-1$ choices of $w_{k}$ with $w_{k}\neq w_{k-2}$ and $\delta$ comes from the weight $\delta$ for those walks with $w_{k}=w_{k-2}$.  

If $*\in V(G')=V\cup \mathcal{K}$ and $K\in \mathcal{K}$, then $A^{(k)}A_{*,K}$ equals the sums of the weights of the walks $*=w_0, w_1, \dots, w_{k}=z$ of length $k$ from $*$ to $z$, where $z\in V$ ranges through all neighbors of $K$ in $G'$. If $w_{k-1}\neq K$ then $*=w_0, w_1,\dots, w_{k}=z, w_{k+1}=K$ is a walk of length $k+1$ with no backtracking at the last step and the total weight of such walks is $A^{(k+1)}_{*,K}$. If $w_{k-1}=K$, then the weight of all walks $*=w_0, w_1,\dots, w_k$ with $w_{k-1}=K$ is $(l-1)(1-\delta)A^{(k-1)}_{*,K}$, where $l-1$ comes from the $l-1$ choices of $w_k$ such that $w_k\neq w_{k-2}$ and $1-\delta$ comes from the fact that for every such walk, we need to keep the $\delta$-portion of backtracking at the last step. 

Using \eqref{eq:1stepA}, \eqref{eq:2stepA}, \eqref{eq:k+1stepA}, we obtain the following recurrence relations: 
$$
\begin{cases}
R^{(k+1)}=V^{(k+1)}-(d-1+\delta)R^{(k)}.\\
V^{(k+1)}=R^{(k)}(R^{(1)}+dI)-(l-1)(1-\delta)V^{(k)}.
\end{cases}
$$

With $R=R^{(1)}$, a simple calculation yields
\begin{equation}\label{eq:RecurOfU}
\begin{cases}
R^{(2)}=R^2-(l-2)(1-\delta)R-d(l-1)(1-\delta)I.\\
R^{(k+1)}=R^{(k)}R-(l-2)(1-\delta)R^{(k)}-(l-1)(1-\delta)(d-1+\delta)R^{(k-1)}.
\end{cases}
\end{equation}

We will use strong induction on $k$ to prove our claim \eqref{claimofUk}. For the base case $k=1$, by plugging in $U_{-1}(x)=0$, $U_0(x)=1$, $U_1(x)=2x$, $U_2(x)=4x^2-1$, we have
\begin{eqnarray*}
R^{(1)}&=&R=\sqrt{(l-1)(1-\delta)(d-1+\delta)}\frac{R-(l-2)(1-\delta)I}{\sqrt{(l-1)(1-\delta)(d-1+\delta)}}+(l-2)(1-\delta)I\\
&=&Q_1(R)
\end{eqnarray*}
Also, for $k=2$, we have that
\begin{eqnarray*}
R^{(2)}&=&R^2-(l-2)(1-\delta)R-d(l-1)(1-\delta)I\\
&=&\sqrt{(l-1)(1-\delta)(d-1+\delta)}\cdot\\
&&\Bigg(\sqrt{(l-1)(1-\delta)(d-1+\delta)}\bigg(4\Big(\frac{R-(l-2)(1-\delta)I}{2\sqrt{(l-1)(1-\delta)(d-1+\delta)}}\Big)^2-I\bigg)\\
&&+(l-2)(1-\delta)\frac{R-(l-2)(1-\delta)I}{\sqrt{(l-1)(1-\delta)(d-1+\delta)}}\\
&&-\frac{(1-\delta)\sqrt{(l-1)(1-\delta)}}{\sqrt{d-1+\delta}}I\Bigg)\\
&=&Q_2(R).
\end{eqnarray*}
For the induction step, assume that \eqref{claimofUk} is true for every $l\leq k$ and we will prove it for $k+1$. 
Because $q_k$ is a linear combination of Chebyshev polynomials, we have the recurrence relation
$q_{k+1}(y)=2yq_k(y)-q_{k-1}(y)$ for every $k\geq 1$. Therefore we have
\begin{align*}
R^{(k+1)}&=R^{(k)}R-(l-2)(1-\delta)R^{(k)}-(l-1)(1-\delta)(d-1+\delta)R^{(k-1)}\\
&=RQ_k(R)-(l-2)(1-\delta)Q_k(R)-(l-1)(1-\delta)(d-1+\delta)Q_{k-1}(R)\\
&=\sqrt{\big((l-1)(1-\delta)(d-1+\delta)\big)^{k}}\frac{R-(l-2)(1-\delta)I}{\sqrt{(l-1)(1-\delta)(d-1+\delta)}}\\
&\cdot q_k\Big(\frac{R-(l-2)(1-\delta)I}{2\sqrt{(l-1)(1-\delta)(d-1+\delta)}}\Big)\\
&-\sqrt{\big((l-1)(1-\delta)(d-1+\delta)\big)^{k}}q_{k-1}\Big(\frac{R-(l-2)(1-\delta)I}{2\sqrt{(l-1)(1-\delta)(d-1+\delta)}}\Big)\\
&=\sqrt{\big((l-1)(1-\delta)(d-1+\delta)\big)^{k}}q_{k+1}\Big(\frac{R-(l-2)(1-\delta)I}{2\sqrt{(l-1)(1-\delta)(d-1+\delta)}}\Big)\\
&=Q_{k+1}(R).
\end{align*}
This completes the induction process.

Let $1=\mu_1(k)$, $\mu_2(k)$, $\cdots$, $\mu_n(k)$ denote the eigenvalues of $P^{(k)}$, and let
$$
\mu(k):=\max\{|\mu_2(k)|,~\cdots,~|\mu_n(k)|\}.
$$
We claim that
\begin{eqnarray}
\frac{\mu(k)}{n}\le \max_{i,j}\Big|P^{(k)}_{i,j}-\frac{1}{n}\Big|\le \mu(k).
\end{eqnarray}
To see this, note that the unit vector $v_1:=\frac{1}{\sqrt{n}}(1,1,\cdots,1)$ is an eigenvector of $P^{(k)}$ corresponding to the eigenvalue $\mu_1=1$. Therefore
$$
\max_{i,j}\Big|P^{(k)}_{i,j}-\frac{1}{n}\Big|=\max_{i,j}\Big|\big<(P^{(k)}-v_1^Tv_1)e_i,e_j\big>\Big|\le \max_{|u|=|v|=1}\Big|\big<(P^{(k)}-v_1^Tv_1)u,v\big>\Big|=\mu(k).
$$
On the other hand
$$
\max_{i,j}\Big|P^{(k)}_{i,j}-\frac{1}{n}\Big|\ge \frac{1}{n}\sqrt{\sum_{i,j}\Big|P_{i,j}^{(k)}-\frac{1}{n}\Big|^2}=
\frac{1}{n}\sqrt{\tr[(P^{(k)}-v_1^Tv_1)^2]}=\frac{1}{n}\sqrt{\sum_{s=2}^{n}\mu_{s}^{2}(k)}\ge \frac{\mu(k)}{n}.
$$
Thus,
\begin{eqnarray}\label{eq:ReprOfMixingRateByEvalue}
\tilde{\rho}=\limsup_{k\rightarrow\infty}\mu(k)^{1/k}=\max_{2\le i\le n}\limsup_{k\rightarrow\infty}|\mu_i(k)|^{1/k}.
\end{eqnarray}
Using \eqref{eq:RepresOfProbTransMx} and that $R=R^{(1)}$ is the adjacency matrix of $G$, we get that
\begin{equation}\label{eq:ReprOfEvalue}
\mu_i(k)=\frac{1}{d(l-1)}\sqrt{\Big(\frac{1-\delta}{(d-1+\delta)(l-1)}\Big)^{k-1}}q_k\Big(\frac{\lambda_i-(l-2)(1-\delta)}{2\sqrt{(l-1)(1-\delta)(d-1+\delta)}}\Big)
\end{equation}
where $\lambda_i$ is the $i$-th largest eigenvalue of $R$. We will use the following lemma whose proof is contained in the Appendix. 

\begin{lemma}\label{mainlemma}
For $d>2$, $l\ge 2$ and $\delta\in [0,1)$, if $l(1-\delta)\le d$, then
\begin{eqnarray}\label{eq:MainLemmaLess}
\limsup_{k\rightarrow\infty}|q_k(y)|^{1/k}=\begin{cases}
1 & \mbox{if}~-1\le y\le 1\\
|y|+\sqrt{y^2-1} &\mbox{if}~|y|\ge 1.
\end{cases}
\end{eqnarray}
For $d\ge 2$, $l\ge 2$ and $\delta\in [0,1)$, if $l(1-\delta)>d$, then
\begin{equation}\label{eq:MainLemmaGreater}
\limsup_{k\rightarrow\infty}|q_k(y)|^{1/k}=\begin{cases}
1&\mbox{if} ~-1\le y\le 1\\
|y|+\sqrt{y^2-1} &\mbox{if} ~|y|>1, y\neq\frac{-d-(l-2)(1-\delta)}{2\sqrt{(l-1)(1-\delta)(d-1+\delta)}}\\
\sqrt{\frac{d-1+\delta}{(l-1)(1-\delta)}} &\mbox{if}~y=\frac{-d-(l-2)(1-\delta)}{2\sqrt{(l-1)(1-\delta)(d-1+\delta)}}
\end{cases}
\end{equation}
\end{lemma}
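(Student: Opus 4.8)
\noindent\textit{Proof strategy for Lemma~\ref{mainlemma}.}

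The plan is to produce an exact closed form for $q_k(y)$ and to read off its exponential growth rate from it. Abbreviate the coefficients in \eqref{eq:RelationChebyshev} by $a=\sqrt{(l-1)(1-\delta)(d-1+\delta)}$, $b=(l-2)(1-\delta)$, and $c=\frac{(1-\delta)\sqrt{(l-1)(1-\delta)}}{\sqrt{d-1+\delta}}$, so that $q_k=a\,U_k+b\,U_{k-1}-c\,U_{k-2}$ for $k\ge 1$; note $a,c>0$. For a parameter $y$ write $y=\tfrac12(z+z^{-1})$ and substitute the standard identity $U_j(y)=\frac{z^{j+1}-z^{-(j+1)}}{z-z^{-1}}$, valid for all $j\ge-2$ (it returns $U_{-1}=0$, $U_{-2}=-1$). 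Collecting powers of $z$ yields, for every $k\ge 1$,
\[
q_k(y)=\alpha(z)\,z^{k}+\beta(z)\,z^{-k},\qquad \alpha(z)=\frac{a z^{2}+bz-c}{z^{2}-1},\qquad \beta(z)=\alpha(z^{-1}).
\]
When $|y|>1$ I take $z$ to be the real root of $z+z^{-1}=2y$ with $|z|>1$, so $|z|=|y|+\sqrt{y^{2}-1}=\psi(|y|)$; when $|y|<1$ I take $z=e^{i\theta}$ with $y=\cos\theta$, $\theta\in(0,\pi)$.

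Case $|y|\le 1$. Here $|z|=1$, so $|q_k(y)|\le|\alpha(z)|+|\beta(z)|$ is bounded and $\limsup_k|q_k(y)|^{1/k}\le 1$. For the reverse inequality it suffices to check that the sequence $(q_k(y))_{k\ge1}$ is not identically zero: if it were, then $q_1(y)=q_2(y)=0$, but $q_1(y)=2ay+b$ forces $y=-b/(2a)$, and a direct computation then gives $q_2(y)=-(a+c)\neq 0$, a contradiction. Hence $\alpha(z)\neq 0$, and $q_k(y)=2\,\mathrm{Re}\big(\alpha(z)z^{k}\big)$ is a bounded, genuinely oscillating sequence: if $\theta/\pi$ is irrational the values $k\theta\bmod 2\pi$ equidistribute, so $\limsup_k|q_k(y)|=2|\alpha(z)|>0$; if $\theta/\pi$ is rational the sequence is periodic and not identically zero, so again $\limsup_k|q_k(y)|>0$; and in the confluent cases $y=\pm1$ one uses instead $U_j(\pm1)=(\pm1)^{j}(j+1)$ to see that $q_k(\pm1)$ is $(\pm1)^k$ times a nonzero polynomial in $k$. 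In every subcase $|q_k(y)|$ grows at most polynomially and does not tend to $0$, so $\limsup_k|q_k(y)|^{1/k}=1$.

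Case $|y|>1$. Now $|z|>1$, so $\beta(z)z^{-k}\to 0$ and the growth of $q_k(y)$ is dictated by $\alpha(z)$. If $\alpha(z)\neq0$ then $|q_k(y)|^{1/k}\to|z|=|y|+\sqrt{y^{2}-1}$; if $\alpha(z)=0$ then $q_k(y)=\beta(z)z^{-k}$ with $\beta(z)\neq 0$ (one cannot have $\alpha(z)=\beta(z)=0$, since subtracting the two vanishing conditions gives $(a+c)(z^{2}-1)=0$, impossible for $|z|>1$), and $|q_k(y)|^{1/k}\to|z|^{-1}$. Thus everything reduces to the roots of $a z^{2}+bz-c$. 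Its discriminant collapses pleasantly, $b^{2}+4ac=(1-\delta)^{2}\big[(l-2)^{2}+4(l-1)\big]=(1-\delta)^{2}l^{2}$, so the roots are
\[
z_{+}=\sqrt{\tfrac{1-\delta}{(l-1)(d-1+\delta)}},\qquad z_{-}=-\sqrt{\tfrac{(l-1)(1-\delta)}{d-1+\delta}}.
\]
Under the hypotheses one checks $|z_{+}|<1$, whereas $|z_{-}|>1$ if and only if $l(1-\delta)>d$. Consequently, if $l(1-\delta)\le d$ then no root of $az^{2}+bz-c$ has modulus $>1$, so $\alpha(z)\neq0$ for every $|y|>1$ and \eqref{eq:MainLemmaLess} follows; if $l(1-\delta)>d$ then the only $y$ with $|y|>1$ and $\alpha(z)=0$ is $y=\tfrac12(z_{-}+z_{-}^{-1})$, which simplifies exactly to $\frac{-d-(l-2)(1-\delta)}{2\sqrt{(l-1)(1-\delta)(d-1+\delta)}}$, and there $\limsup_k|q_k(y)|^{1/k}=|z_{-}|^{-1}=\sqrt{\tfrac{d-1+\delta}{(l-1)(1-\delta)}}$, which is \eqref{eq:MainLemmaGreater}.

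I expect the only real obstacle to be the bookkeeping around the exceptional point: noticing that the discriminant is the perfect square $(1-\delta)^{2}l^{2}$, determining which of $z_{\pm}$ can have modulus exceeding $1$ and tying that precisely to the dichotomy between $l(1-\delta)\le d$ and $l(1-\delta)>d$, and then verifying that $\tfrac12(z_{-}+z_{-}^{-1})$ coincides with the value named in the statement (equivalently, that it is the image of the eigenvalue $\lambda_i=-d$ under $\lambda\mapsto\frac{\lambda-(l-2)(1-\delta)}{2\sqrt{(l-1)(1-\delta)(d-1+\delta)}}$). The unimodular/confluent analysis at $|y|=1$ and the ``not identically zero'' lower bound are routine once the closed form above is in hand, and the stated ranges of $d$ are exactly what makes the modulus comparisons $|z_{\pm}|$ versus $1$ behave as claimed.
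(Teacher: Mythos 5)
Your argument is correct, and for $|y|>1$ it is essentially the paper's: both write $q_k(y)=\alpha(z)z^k+\beta(z)z^{-k}$ with $y=\tfrac12(z+z^{-1})$ and locate the exceptional point as the root of the numerator of $\alpha$ with modulus exceeding $1$ (your $z_-$ is the paper's $|z_0|=\sqrt{(l-1)(1-\delta)/(d-1+\delta)}$, and your observation that $b^2+4ac=(1-\delta)^2l^2$ is a perfect square makes the root-finding, and the verification that $|z_-|>1\Leftrightarrow l(1-\delta)>d$, cleaner than the paper's separate treatment of $y>1$ via a sign estimate on $A(z)$). Where you genuinely diverge is the unimodular case $|y|\le1$: the paper splits $\theta/\pi$ into the subintervals $[0,1/2]$, $[1/2,3/4]$, $[3/4,1]$ (irrational), the rational case, and $\theta\in\{0,\pi\}$, and in each regime hand-picks a subsequence $k_j$ with $\sin((k_j+1)\theta)\to1$ or $\sin(k_j\theta)\to1$ so that a coefficient inequality forces $q_{k_j}(y)$ to stay bounded away from $0$ — this is where the hypothesis $d>2$ (and a separate patch for $d=2$, $l>2$) enters the paper's proof. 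Your route replaces all of that by the single identity $q_k(y)=2\,\mathrm{Re}(\alpha(z)z^k)$ for $z=e^{i\theta}$, the two-line check that $q_1$ and $q_2$ cannot both vanish (so $\alpha(z)\neq0$, since $\beta=\bar\alpha$ on the circle), and density/periodicity of $k\theta\bmod 2\pi$; the confluent points $y=\pm1$ are handled by $U_j(\pm1)=(\pm1)^j(j+1)$, exactly as in the paper. Your version is shorter, avoids the sign case analysis on the coefficients $a,b,c$, and makes transparent that the only delicate input is the non-vanishing of $\alpha$; the paper's version, while longer, produces explicit subsequences and quantitative lower bounds on $|q_{k_j}(y)|$ along them.
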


Thus, if $l(1-\delta)\le d$, by \eqref{eq:ReprOfMixingRateByEvalue}, \eqref{eq:ReprOfEvalue} and \eqref{eq:MainLemmaLess}, we obtain that
\begin{equation*}
\tilde{\rho}=\sqrt{\frac{1-\delta}{(d-1+\delta)(l-1)}}\psi\Big(\frac{\lambda}{2\sqrt{(l-1)(1-\delta)(d-1+\delta)}}\Big),
\end{equation*}
where $\lambda:=\max_{\lambda_2,\lambda_n}|\lambda_i-(l-2)(1-\delta)|$. This proves \eqref{LimitFnless}. 

The random walk $W_{\epsilon}$ will converge to uniform distribution if $\tilde{\rho}<1$. This would be implied by 
\begin{eqnarray}\label{eq:EquivMixRate}
\lambda<(d-1+\delta)(l-1)+(1-\delta)=d(l-1)-(l-2)(1-\delta).
\end{eqnarray}
  
We will verify \eqref{eq:EquivMixRate} in the following argument. If $\lambda_2\ge (l-2)(1-\delta)$, then since $G$ is connected, $\lambda_2<d(l-1)$ and therefore,
\begin{equation*}
\lambda_2-(l-2)(1-\delta)<d(l-1)-(l-1)(1-\delta)+(1-\delta)=(d-1+\delta)(l-1)+(1-\delta).
\end{equation*}
If $\lambda_2<(l-2)(1-\delta)$, then $|\lambda_2-(l-2)(1-\delta)|=(l-2)(1-\delta)-\lambda_2 \le (l-2)(1-\delta)-\lambda_n$. From the observation contained on page 2 just before Theorem \ref{main1}, we know that $\lambda_n\ge -d$. When $l>2$ and $d>2$, we get that
\begin{align*}
(l-2)(1-\delta)-\lambda_n&\le (l-2)(1-\delta)+d\\
&=(d-1+\delta)(l-1)+(1-\delta)-(d+2\delta-2)(l-2)\\
&<(d-1+\delta)(l-1)+(1-\delta).
\end{align*}
If $l=2$, then $d=d(l-1)$ and, since the graph is non-bipartite, $\lambda_n>-d$. Consequently, we have that
$$
(l-2)(1-\delta)-\lambda_n<(l-2)(1-\delta)+d\le (d-1+\delta)(l-1)+(1-\delta)
$$
which proves that $\tilde{\rho}<1$.

Lastly, if we treat $\tilde{\rho}$ as a function of $\epsilon$ on $[0,1/d]$, in order to verify that $\tilde{\rho}(\epsilon)$ is continuous on $[0,1/d]$, it suffices to verify that 
$$\lim_{\epsilon\rightarrow 1/d}\tilde{\rho}(\epsilon)=\max_{i:2\leq i\leq n}\frac{|\lambda_i|}{d(l-1)}
$$
In fact, by the fact that $\delta\rightarrow 1$ when $\epsilon\rightarrow 1/d$, we get that
\begin{eqnarray*}
\lim_{\epsilon\rightarrow 1/d}\tilde{\rho}(\epsilon)&=&\lim_{\delta\rightarrow 1}\sqrt{\frac{1-\delta}{(d-1+\delta)(l-1)}}\psi\Big(\frac{\lambda}{2\sqrt{(l-1)(1-\delta)(d-1+\delta)}}\Big)\\
&=&\lim_{\delta\rightarrow 1}\sqrt{\frac{1-\delta}{(d-1+\delta)(l-1)}}\frac{\lambda+\sqrt{\lambda^2-4(l-1)(1-\delta)(d-1+\delta)}}{2\sqrt{(l-1)(1-\delta)(d-1+\delta)}}\\
&=&\max_{i: 2\leq i\leq n}\frac{|\lambda_i|}{d(l-1)}.
\end{eqnarray*}
This finishes the proof of the case $l(1-\delta)\le d$.

If $l(1-\delta)>d$ and $\lambda_i=-d$ for some $i$, then equation \eqref{eq:MainLemmaGreater} of Lemma \ref{mainlemma} implies that
\begin{equation}
\limsup_{k\rightarrow\infty}|\mu_i(k)|^{1/k}=\limsup_{k\rightarrow\infty}|q_k(y_0)|^{1/k}=\sqrt{\frac{d-1+\delta}{(l-1)(1-\delta)}}<1,
\end{equation}
where 
$$
y_0=\frac{-d-(l-2)(1-\delta)}{2\sqrt{(l-1)(1-\delta)(d-1+\delta)}}.
$$
Because $\psi(y)\geq 1$ for all $y$, we deduce that since there exists some $i$ such that $\lambda_i>-d$ (as $G$ is not complete), then
$$
\tilde{\rho}=\sqrt{\frac{1-\delta}{(d-1+\delta)(l-1)}}\psi\Big(\frac{\hat{\lambda}}{2\sqrt{(l-1)(1-\delta)(d-1+\delta)}}\Big)
$$
where
$$
\hat{\lambda}:=\max_{i:2\leq i\leq n;~\lambda_i>-d}|\lambda_i-(l-2)(1-\delta)|.
$$

Note that the random walk $W_\epsilon$ converges to uniform distribution in this case as well. If $\lambda_i=\lambda_{i+1}=\cdots=\lambda_n=-d$ where $i$ is the smallest index with this property, then the mixing rate is
$$
\tilde{\rho}=\sqrt{\frac{1-\delta}{(d-1+\delta)(l-1)}}\psi\Big(\frac{\hat{\lambda}}{2\sqrt{(l-1)(1-\delta)(d-1+\delta)}}\Big)
$$
with $\hat{\lambda}=\max_{j=2,i-1}|\lambda_j-(l-2)(1-\delta)|$. 

To make sure that this is strictly less than 1, we need $\lambda<(d-1+\delta)(l-1)+(1-\delta)$. If $\lambda_2\ge (l-2)(1-\delta)$, then we have the same argument as in the proof of the case $l(1-\delta)\le d$ since $G$ is connected. If $\lambda_2<(l-2)(1-\delta) $, we have $|\lambda_2-(l-2)(1-\delta)|=(l-2)(1-\delta)-\lambda_2<(l-2)(1-\delta)+d$ as $G$ is non-complete. So we can use the same argument to show $(l-2)(1-\delta)+d\le (d-1+\delta)(l-1)+(1-\delta)$. So we have $|\lambda_2-(l-2)(1-\delta)|<(d-1+\delta)(l-1)+(1-\delta)$. On the other hand, for $\lambda_{i-1}$, we can use the same argument as that of $\lambda_2$ by noting that $-d<\lambda_{i-1}\le\lambda_2<d(l-1)$. 
\end{proof}

\begin{proof}[Proof of Corollary \ref{maincor1}]
Substitute $\delta$ by 0 in Theorem \ref{main1}.
\end{proof}

\section{Comparing the mixing rates of simple, non-backtracking and cliquewise non-backtracking random walks}

\subsection{The mixing rates of the usual random walk and the cliquewise non-backtracking random walk}


\begin{corollary}\label{cor:ComparisonBtwCliqueNBRWSimpleRW}
Under the conditions of Corollary \ref{maincor1}, if both of the following conditions 
\begin{enumerate}
\item $d\ge l$;
\item $\lambda:=\max_{i=2,n}|\lambda_i-(l-2)|\ge 2\sqrt{(d-1)(l-1)}$
\end{enumerate}
are true, then $\widetilde{\rho}<\rho$, where $\rho$ is the mixing rate of the simple random walk on $G$ and $\tilde{\rho}$ is the mixing rate of the cliquewise non-backtracking random walk on $G$.
\end{corollary}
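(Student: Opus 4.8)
The plan is to substitute the two closed forms for the mixing rates into $\widetilde{\rho}<\rho$ and reduce it to an elementary inequality about quadratics. By hypothesis we are in case~(1) of Corollary~\ref{maincor1} (so $d\ge 3$, $l\ge 2$, $d\ge l$; when $d=2$ the hypotheses are vacuous, since then $l=2$, $G$ is a cycle, and $\lambda<2$). Corollary~\ref{maincor1}(1) gives $\widetilde{\rho}=\frac{1}{\sqrt{(d-1)(l-1)}}\,\psi\big(\tfrac{\lambda}{2\sqrt{(d-1)(l-1)}}\big)$, and hypothesis~(2) is exactly the statement that the argument of $\psi$ is $\ge 1$, so
\[
\widetilde{\rho}=\frac{\lambda+\sqrt{\lambda^{2}-4(d-1)(l-1)}}{2(d-1)(l-1)},\qquad\text{while}\qquad \rho=\frac{\mu}{d(l-1)},\quad \mu:=\max(|\lambda_{2}|,|\lambda_{n}|),
\]
the latter by \cite[Corollary~5.2]{L}.

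Next I would work directly with $\widetilde{\rho}<\rho$. Clearing the positive denominator $2(d-1)(l-1)$ turns it into $\lambda+\sqrt{\lambda^{2}-4(d-1)(l-1)}<\frac{2(d-1)\mu}{d}$; since the left side is $\ge\lambda$ the right side is forced to be positive, and then isolating the radical and squaring is a reversible step. One is left with the pair
\[
\lambda<\frac{2(d-1)\mu}{d}\qquad\text{and}\qquad \lambda<\frac{(d-1)\mu}{d}+\frac{(l-1)d}{\mu}.
\]
(Equivalently: the displayed formula says $\lambda=(d-1)(l-1)\widetilde{\rho}+\widetilde{\rho}^{-1}=:g(\widetilde{\rho})$ with $g$ strictly increasing on $[\tfrac{1}{\sqrt{(d-1)(l-1)}},\infty)$, so it suffices to show $\rho\ge\tfrac{1}{\sqrt{(d-1)(l-1)}}$ and $\lambda<g(\rho)=\frac{(d-1)\mu}{d}+\frac{(l-1)d}{\mu}$.)

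The remaining work is to estimate $\lambda=\max\{|\lambda_{2}-(l-2)|,\,|\lambda_{n}-(l-2)|\}$ against $\mu$, using $0\le\lambda_{2}<d(l-1)$ (Perron–Frobenius and non-completeness), $-d\le\lambda_{n}<0$ (from $A(G)=NN^{t}-dI$ and the existence of an edge), and hypothesis~(2). First, $\rho\ge\tfrac{1}{\sqrt{(d-1)(l-1)}}$ follows from $\mu\ge\lambda-(l-2)\ge 2\sqrt{(d-1)(l-1)}-(l-2)$ together with the inequality $(d-2)^{2}(l-1)\ge(l-2)^{2}(d-1)$, which holds for $d\ge l$. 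Second, the crude bound $\lambda\le\mu+(l-2)$ settles the regime $\mu<d$, since $\mu+(l-2)<\frac{(d-1)\mu}{d}+\frac{(l-1)d}{\mu}$ is equivalent to $(\mu-d)(\mu+d(l-1))<0$. For $\mu\ge d$ — which, as $|\lambda_{n}|\le d$, can only occur when $\mu=\lambda_{2}$ (or $\lambda_{2}=|\lambda_{n}|=d$) — I would instead use the sharper inputs $|\lambda_{n}-(l-2)|=(l-2)+|\lambda_{n}|\le(l-2)+d$ and $\lambda_{2}-(l-2)<d(l-1)-(l-2)$, together with the fact that $h(\mu):=\frac{(d-1)\mu}{d}+\frac{(l-1)d}{\mu}$ is increasing on $[d,\infty)$ — and this monotonicity is precisely where hypothesis~(1), $d\ge l$, enters.

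The main obstacle, and where I expect almost all of the effort to go, is the boundary case $\mu=d$ (equivalently $\lambda_{2}=d$ or $\lambda_{n}=-d$): there the estimates above only yield $\widetilde{\rho}\le\rho$, so I would need a careful split into subcases — recording in each exactly which of the two displayed inequalities might degenerate to an equality — and then use the rigidity of the structural hypotheses (for instance, that $\lambda_{n}=-d$ together with $|\lambda_{2}|\le d$ constrains $G$ very strongly) to restore strictness. Everything else is routine manipulation of quadratics.
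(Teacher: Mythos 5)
Your reduction of $\widetilde{\rho}<\rho$ to $\lambda<g(\rho)$ with $g(t)=(d-1)(l-1)t+t^{-1}$ is correct and is actually a cleaner, more unified route than the paper's, which instead splits into three cases according to which of $|\lambda_2-(l-2)|$ and $(l-2)-\lambda_n$ realizes $\lambda$ and which of $|\lambda_2|,|\lambda_n|$ realizes $\mu$, and in each case completes a square under the radical using $\lambda_2<d(l-1)$ or $|\lambda_n|\le d$. Your intermediate steps all check out: $(d-2)^2(l-1)\ge(l-2)^2(d-1)$ for $d\ge l$ does give $\rho\ge 1/\sqrt{(d-1)(l-1)}$; the factorization $(\mu-d)(\mu+d(l-1))<0$ disposes of $\mu<d$; and strict monotonicity of $h$ on $(d,\infty)$, which is exactly where $d\ge l$ enters, disposes of $\mu>d$.

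The gap is the boundary case you flag, and it cannot be closed: when $\lambda_n=-d$ and $|\lambda_2|\le d$ one gets $\lambda=d+l-2$, hence $\lambda^2-4(d-1)(l-1)=(d-l)^2$ and $\widetilde{\rho}=\frac{(d+l-2)+(d-l)}{2(d-1)(l-1)}=\frac{1}{l-1}=\frac{d}{d(l-1)}=\rho$ exactly, while both hypotheses of the corollary hold (hypothesis (2) because $d+l-2=(d-1)+(l-1)\ge 2\sqrt{(d-1)(l-1)}$ by AM--GM). Such graphs exist: the point graph of any partial geometry $pg(K,R,T)$ with $R\ge K\ge 3$ — e.g.\ the collinearity graph of $GQ(2,2)$, with $d=l=3$ and spectrum $6,1^{9},(-3)^{5}$, where $\widetilde{\rho}=\rho=1/2$ — and these are precisely the graphs treated in the paper's Corollary \ref{cor:PG}. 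So no rigidity argument can restore strictness; the correct conclusion at this boundary is $\widetilde{\rho}=\rho$, and the corollary should either assert $\widetilde{\rho}\le\rho$ or exclude $\lambda_n=-d$. You are in good company: the paper's own Case 2 terminates in the chain $\frac{|\lambda_n|-1}{d-1}\le\frac{|\lambda_n|}{d}$, which is an equality precisely when $|\lambda_n|=d$, yet the strict inequality is asserted; your write-up has the virtue of making the defect visible rather than papering over it.
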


\begin{proof}[Proof of Corollary  \ref{cor:ComparisonBtwCliqueNBRWSimpleRW}]
As mentioned in the introduction, it is known that $\rho=\frac{\lambda'}{d(l-1)}$ where $\lambda':=\max \{|\lambda_2|, |\lambda_n|\}$ (see \cite[Corollary 5.2]{L}). 

If $\max_{i=2,n}|\lambda_i-(l-2)|\ge 2\sqrt{(d-1)(l-1)}$, we have either $|\lambda_2-(l-2)|\ge (l-2)-\lambda_n$ with $|\lambda_2-(l-2)|\ge 2\sqrt{(d-1)(l-1)}$, or else $|\lambda_2-(l-2)|\le (l-2)-\lambda_n$ with $|(l-2)-\lambda_n\ge 2\sqrt{(d-1)(l-1)}$. We will discuss in three cases. 
\begin{itemize}
\item Case 1: $|\lambda_2-(l-2)|\ge 2\sqrt{(d-1)(l-1)}$ and $|\lambda_2-(l-2)|\ge (l-2)-\lambda_n$.
\item Case 2: $(l-2)-\lambda_n\ge 2\sqrt{(d-1)(l-1)}$ and $|\lambda_2-(l-2)|\le (l-2)-\lambda_n$ and $|\lambda_n|\ge |\lambda_2|$. 
\item Case 3: $(l-2)-\lambda_n\ge 2\sqrt{(d-1)(l-1)}$ and $|\lambda_2-(l-2)|\le (l-2)-\lambda_n$ and $|\lambda_n|\le  |\lambda_2|$.
\end{itemize}

For Case 1, we claim that this case implies $\lambda_2\ge l-2$. Because if otherwise then we have 
$$
(l-2)-\lambda_2=|\lambda_2-(l-2)|\ge (l-2)-\lambda_n
$$
therefore $\lambda_2\le \lambda_n\Rightarrow\lambda_2=\lambda_n$, which means that $G$ is complete, contradicts to our assumption on $G$. Therefore it is clear that $\lambda_2>|\lambda_n|$ as $l-2> 0$. So for this case, we have
\begin{eqnarray}
&&\widetilde{\rho}=\frac{\lambda_2-(l-2)+\sqrt{\lambda_2^2-
2(l-2)\lambda_2+l^2-4d(l-1)}}{2(d-1)(l-1)}\label{eq:Case1tilderho}\\
&&\rho=\frac{\lambda_2}{d(l-1)}\label{eq:Case1rho}
\end{eqnarray}
and
\begin{equation}\label{eq:lambda2gethanl}
\lambda_2\ge 2\sqrt{(d-1)(l-1)}+(l-2)\ge 2+l-2=l
\end{equation}
Note that, since the graph $G$ is connected, then
\begin{eqnarray}
&&\lambda_2<d(l-1)\\
&&\Rightarrow -4d(l-1)<-4\lambda_2\label{eq:-4lambda2}
\end{eqnarray}
Thus we have
\begin{eqnarray*}
\widetilde{\rho}&=&\frac{\lambda_2-(l-2)+\sqrt{\lambda_2^2-
2(l-2)\lambda_2+l^2-4d(l-1)}}{2(d-1)(l-1)}\\
&\le &\frac{\lambda_2-(l-2)+\sqrt{\lambda_2^2-
2(l-2)\lambda_2+l^2-4\lambda_2}}{2(d-1)(l-1)}~~~\mbox{by}~(\ref{eq:-4lambda2})\\
&=&\frac{\lambda_2-(l-2)+\sqrt{(\lambda_2-l)^2}}{2(d-1)(l-1)}\\
&=&\frac{\lambda_2-(l-2)+\lambda_2-l}{2(d-1)(l-1)}~~~\mbox{by}~(\ref{eq:lambda2gethanl})\\
&=&\frac{\lambda_2-(l-1)}{(d-1)(l-1)}=\frac{\lambda_2-(l-1)}{d(l-1)-(l-1)}\\
&\le &\frac{\lambda_2}{d(l-1)}~~~\mbox{by}~(\ref{eq:-4lambda2})\\
&=&\rho
\end{eqnarray*}
which provides Case 1. 

For Case 2, we have
\begin{eqnarray}
&&\widetilde{\rho}=\frac{|\lambda_n|+(l-2)+\sqrt{|\lambda_n|^2+
2(l-2)|\lambda_n|+l^2-4d(l-1)}}{2(d-1)(l-1)}\label{eq:Case2tilderho}\\
&&\rho=\frac{|\lambda_n|}{d(l-1)}\label{eq:Case2rho}
\end{eqnarray}
So it suffices to prove
\begin{equation}\label{eq:sufficeCdt}
\frac{|\lambda_n|+(l-2)+\sqrt{|\lambda_n|^2+
2(l-2)|\lambda_n|+l^2-4d(l-1)}}{2(d-1)}\le \frac{|\lambda_n|}{d}
\end{equation}
and, because of $d\ge l$ and the fact that $\lambda_n\ge -d$
\begin{equation}\label{eq:dgethanl}
d\ge |\lambda_n|\ge 2\sqrt{(d-1)(l-1)}-(l-2)\ge 2\sqrt{(l-1)(l-1)}-(l-2)=l
\end{equation}
Thus the left hand side of (\ref{eq:sufficeCdt}) is:
\begin{eqnarray*}
&&\frac{|\lambda_n|+(l-2)+\sqrt{|\lambda_n|^2+
2(l-2)|\lambda_n|+l^2-4d(l-1)}}{2(d-1)}\\
&\le &\frac{|\lambda_n|+(l-2)+\sqrt{|\lambda_n|^2+
2(l-2)|\lambda_n|+l^2-4|\lambda_n|(l-1)}}{2(d-1)}~~~\mbox{by}~(\ref{eq:dgethanl})\\
&=& \frac{|\lambda_n|+(l-2)+\sqrt{(|\lambda_n|-l)^2}}{2(d-1)}\\
&=& \frac{|\lambda_n|+(l-2)+|\lambda_n|-l}{2(d-1)}~~~\mbox{by (\ref{eq:dgethanl})}\\
&=& \frac{|\lambda_n|-1}{d-1}\le \frac{|\lambda_n|}{d}~~~\mbox{by}~(\ref{eq:dgethanl})
\end{eqnarray*}
which provides Case 2. 

For Case 3, we claim that this case still implies that $\lambda_2\ge l-2$ (by which we will derive the value of $\rho$). To see this, the fact $|\lambda_n|\le |\lambda_2|$ and the fact that $G$ is not complete imply that $\lambda_2\ge 0$. Further if $\lambda_2\le l-2$, we have that 
$$
l-2\ge \lambda_2\ge |\lambda_n|\ge2\sqrt{(d-1)(l-1)}-(l-2) 
$$
therefore $l-2\ge2\sqrt{(d-1)(l-1)}-(l-2)$, which implies $(l-2)^2\ge (d-1)(l-1)\ge (l-1)^2$ by the fact that $d\ge l$, which is impossible. So we have
\begin{eqnarray}
&&\widetilde{\rho}=\frac{|\lambda_n|+(l-2)+\sqrt{|\lambda_n|^2+
2(l-2)|\lambda_n|+l^2-4d(l-1)}}{2(d-1)(l-1)}\label{eq:Case3tilderho}\\
&&\rho=\frac{\lambda_2}{d(l-1)}\label{eq:Case3rho}
\end{eqnarray}
So it suffices to prove
\begin{equation}\label{eq:sufficesCdtCase3}
\frac{|\lambda_n|+(l-2)+\sqrt{|\lambda_n|^2+
2(l-2)|\lambda_n|+l^2-4d(l-1)}}{2(d-1)}\le \frac{\lambda_2}{d}
\end{equation}
In this case, we still have (\ref{eq:dgethanl}) holds. Therefore by the same argument of Case 2, the left hand side of (\ref{eq:sufficesCdtCase3}) is:
\begin{eqnarray*}
\frac{|\lambda_n|+(l-2)+\sqrt{|\lambda_n|^2+
2(l-2)|\lambda_n|+l^2-4d(l-1)}}{2(d-1)}\le  \frac{|\lambda_n|}{d}\le \frac{\lambda_2}{d}
\end{eqnarray*}
which ends the proof of Corollary \ref{cor:ComparisonBtwCliqueNBRWSimpleRW}. 
\end{proof}

\begin{rk}
Inequality (\ref{eq:dgethanl}) implies that if $|\lambda_n-(l-2)|\ge 2\sqrt{(d-1)(l-1)}$ for the case $d\ge l$, then $-d\le \lambda_n\le -l$. In particular, if $d=l$ then $\lambda_n=-d$.
\end{rk}

\begin{corollary}\label{cor:ComparisonBtwCNBRWandSRWlowerbd}
Based on the conditions of Corollary \ref{cor:ComparisonBtwCliqueNBRWSimpleRW} and the three cases provided in the proof of Corollary \ref{cor:ComparisonBtwCliqueNBRWSimpleRW}, we have the following lower bounds:
\begin{itemize}
\item For Case 1,
\begin{equation}\label{eq:Case1Lowerbd}
\frac{\widetilde{\rho}}{\rho}\ge \frac{d}{2(d-1)}-\frac{d(l-2)}{2(d-1)\big(2\sqrt{(d-1)(l-1)}+(l-2)\big)}
\end{equation}
\item For Case 2,
\begin{equation}\label{eq:Case2Lowerbd}
\frac{\widetilde{\rho}}{\rho}\ge \frac{d}{2(d-1)}+\frac{l-2}{2(d-1)}
\end{equation}
\item For Case 3,
\begin{equation}\label{eq:Case3Lowerbd}
\frac{\widetilde{\rho}}{\rho}\ge \frac{d}{2(d-1)}-\frac{d(l-2)}{2(d-1)\big(2\sqrt{(d-1)(l-1)}-(l-2)\big)}
\end{equation}
\end{itemize} 
\end{corollary}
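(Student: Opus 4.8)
The plan is to argue separately in the three cases, in each case starting from the closed forms for $\widetilde{\rho}$ and $\rho$ that were already isolated in the proof of Corollary \ref{cor:ComparisonBtwCliqueNBRWSimpleRW}: \eqref{eq:Case1tilderho}--\eqref{eq:Case1rho} for Case 1, \eqref{eq:Case2tilderho}--\eqref{eq:Case2rho} for Case 2, and \eqref{eq:Case3tilderho}--\eqref{eq:Case3rho} for Case 3. In every case $\widetilde{\rho}$ has the form $\frac{A+\sqrt{B}}{2(d-1)(l-1)}$ with $B\ge 0$ (the radicand is nonnegative because in all three cases $\psi$ is evaluated at an argument $\ge 1$, by hypothesis (2) of Corollary \ref{cor:ComparisonBtwCliqueNBRWSimpleRW}), so the single idea is to drop the radical, $\widetilde{\rho}\ge\frac{A}{2(d-1)(l-1)}$, form the quotient $\widetilde{\rho}/\rho$, and then feed in the eigenvalue estimates that are already on the table from that proof, namely \eqref{eq:lambda2gethanl}, \eqref{eq:dgethanl} and $\lambda_n\ge -d$. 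No new spectral information is needed; everything is elementary algebra once the right monotonicity is observed.

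In Case 1 this gives $\frac{\widetilde{\rho}}{\rho}\ge\frac{d(\lambda_2-(l-2))}{2(d-1)\lambda_2}=\frac{d}{2(d-1)}-\frac{d(l-2)}{2(d-1)\lambda_2}$; since $l\ge 2$ the subtracted term is nonincreasing in $\lambda_2$, so substituting $\lambda_2\ge 2\sqrt{(d-1)(l-1)}+(l-2)$ from \eqref{eq:lambda2gethanl} yields \eqref{eq:Case1Lowerbd}. In Case 2 dropping the radical gives $\frac{\widetilde{\rho}}{\rho}\ge\frac{d(|\lambda_n|+(l-2))}{2(d-1)|\lambda_n|}=\frac{d}{2(d-1)}+\frac{d(l-2)}{2(d-1)|\lambda_n|}$, and now $|\lambda_n|\le d$ (because $\lambda_n\ge -d$) bounds the last term below by $\frac{l-2}{2(d-1)}$, which is \eqref{eq:Case2Lowerbd}.

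Case 3 is the step I expect to be the real obstacle, because there $\rho=\frac{\lambda_2}{d(l-1)}$ is controlled by $\lambda_2$ while the surviving lower bound for $\widetilde{\rho}$ is controlled by $|\lambda_n|$, so after dropping the radical one is only left with $\frac{\widetilde{\rho}}{\rho}\ge\frac{d(|\lambda_n|+(l-2))}{2(d-1)\lambda_2}$ and still has to bound $\lambda_2$ from above in terms of $|\lambda_n|$. The way through is to combine the defining inequality of Case 3 with the fact, proved in Corollary \ref{cor:ComparisonBtwCliqueNBRWSimpleRW}, that $\lambda_2\ge l-2$: then $|\lambda_2-(l-2)|=\lambda_2-(l-2)\le (l-2)-\lambda_n=|\lambda_n|+(l-2)$, i.e. $\lambda_2\le |\lambda_n|+2(l-2)$. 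After substituting this and using $|\lambda_n|\ge 2\sqrt{(d-1)(l-1)}-(l-2)$ from \eqref{eq:dgethanl}, one is reduced to a routine one-variable estimate: either invoke the elementary inequality $|\lambda_n|\bigl(|\lambda_n|+(l-2)\bigr)\ge\bigl(|\lambda_n|+2(l-2)\bigr)\bigl(|\lambda_n|-(l-2)\bigr)$ to get $\frac{\widetilde{\rho}}{\rho}\ge\frac{d}{2(d-1)}\bigl(1-\tfrac{l-2}{|\lambda_n|}\bigr)$ and land exactly on \eqref{eq:Case3Lowerbd}, or note that $x\mapsto\frac{x+(l-2)}{x+2(l-2)}$ is nondecreasing on $[0,\infty)$ and obtain the (slightly stronger) bound with $2\sqrt{(d-1)(l-1)}+(l-2)$ in the denominator, which a fortiori gives \eqref{eq:Case3Lowerbd}. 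The only routine checks left are that every denominator that appears is positive, which follows from $d\ge l\ge 2$ (so $2\sqrt{(d-1)(l-1)}>l-2$ and $\lambda_2\ge l>0$).
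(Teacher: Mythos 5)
Your proposal is correct and follows essentially the same route as the paper: drop the radical via $\psi(x)\ge x$, use the case-by-case formulas for $\widetilde{\rho}$ and $\rho$, and feed in $\lambda_2\ge 2\sqrt{(d-1)(l-1)}+(l-2)$, $|\lambda_n|\le d$, and (for Case 3) $\lambda_2\le|\lambda_n|+2(l-2)$ together with $|\lambda_n|\ge 2\sqrt{(d-1)(l-1)}-(l-2)$. Your side remark that the monotonicity of $x\mapsto\frac{x+(l-2)}{x+2(l-2)}$ actually yields the sharper Case 1 constant in Case 3 is a valid (if unneeded) improvement on the stated bound.
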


\begin{proof}[Proof of Corollary \ref{cor:ComparisonBtwCNBRWandSRWlowerbd}]
For Case 1, we have, by (\ref{eq:Case1tilderho}) and (\ref{eq:Case1rho}) and the fact that $\psi(x)\ge x$, we have
\begin{eqnarray*}
&&\frac{\widetilde{\rho}}{\rho}\ge \frac{d\lambda_2-d(l-2)}{2(d-1)\lambda_2}=\frac{d}{2(d-1)}-\frac{d(l-2)}{2(d-1)\lambda_2}
\end{eqnarray*}
which attains its minimum at $\lambda_2=2\sqrt{(d-1)(l-1)}+(l-2)$, which provides (\ref{eq:Case1Lowerbd}). To see this bound is not trivial, just note that $2\sqrt{(d-1)(l-1)}+(l-2)>(l-2)$, therefore the right hand side of (\ref{eq:Case1Lowerbd}) is positive. 

For case 2, by (\ref{eq:Case2tilderho}) and (\ref{eq:Case2rho}) and the fact that $\psi(x)\ge x$, we have
\begin{eqnarray*}
&&\frac{\widetilde{\rho}}{\rho}\ge \frac{d|\lambda_n|+d(l-2)}{2(d-1)|\lambda_n|}=\frac{d}{2(d-1)}+\frac{d(l-2)}{2(d-1)|\lambda_n|}
\end{eqnarray*}
which attains its minimum at $|\lambda_n|=d$, which provides (\ref{eq:Case2Lowerbd}). 

For case 3, by (\ref{eq:Case3tilderho}) (\ref{eq:Case3rho}) and the facts that $\psi(x)\ge x$ and $|\lambda_n|\ge \lambda_2-2(l-2)$, we have
\begin{eqnarray*}
\frac{\widetilde{\rho}}{\rho}&\ge & \frac{d|\lambda_n|+d(l-2)}{2(d-1)\lambda_2}\ge  \frac{d\lambda_2-2d(l-2)+d(l-2)}{2(d-1)\lambda_2}\\
&=&\frac{d}{2(d-1)}-\frac{d(l-2)}{2(d-1)\lambda_2}
\end{eqnarray*}
since $\lambda_2\ge |\lambda_n|\ge 2\sqrt{(d-1)(l-1)}-(l-2)$. Therefore 
$$
\frac{\widetilde{\rho}}{\rho}\ge \frac{d}{2(d-1)}-\frac{d(l-2)}{2(d-1)\big(2\sqrt{(d-1)(l-1)}-(l-2)\big)}
$$
which provides (\ref{eq:Case3Lowerbd}). To see that this is not a trivial bound, it suffices to prove that
$$
2\sqrt{(d-1)(l-1)}-(l-2)>l-2
$$
This is clear because $d\ge l$, therefore
$$
2\sqrt{(d-1)(l-1)}-(l-2)\ge 2\sqrt{(l-1)(l-1)}-(l-2)=l>l-2
$$
which ends the proof of Corollary \ref{cor:ComparisonBtwCNBRWandSRWlowerbd}.
\end{proof}

\begin{rk}
In Corollary \ref{cor:ComparisonBtwCNBRWandSRWlowerbd}, if we set $l=2$, then all cases reduce to 
$$
\frac{\widetilde{\rho}}{\rho}\ge \frac{d}{2(d-1)}
$$
which is the same as Corollary 1.2 in \cite{ABLS}. 
\end{rk}

\subsection{Comparison of the mixing rates of the non-backtracking random walk and the cliquewise non-backtracking random walk}

\begin{corollary}\label{cor:ComparisonBtwCNBRWandNBRW}
 Based on the conditions of Corollary \ref{maincor1} with the case $d\ge l$, define the following five constants: 
\begin{eqnarray*}
A(d,l)&:=&\frac{2\big(d(l-1)-1\big)}{2(d-1)(l-1)+\sqrt{l-2}\sqrt{(d-1)(l-1)}\Big(\sqrt{l-2}+\sqrt{(l-6)+4\sqrt{(d-1)(l-1)}\Big)}}\\
B(d,l)&:=& \frac{1+\sqrt{1-\frac{4}{(d-1)(l-1)}}}{1+\sqrt{1-\frac{4}{d(l-1)-1}}}\\
C(d,l)&:=& \frac{\sqrt{d(l-1)-1}}{\sqrt{(d-1)(l-1)}}\\
D(d,l)&:=& \frac{2\big(d(l-1)-1\big)+\sqrt{l-2}\sqrt{d(l-1)-1}\Big(\sqrt{l-2}
+\sqrt{(l+2)+4\sqrt{d(l-1)-1}}\Big)}{2(d-1)(l-1)}\\
E(d,l)&:=& \frac{2\big(d(l-1)-1\big)}{\big(l-1\big)\big(d+\sqrt{d^2-4(d(l-1)-1)}\big)}\\
F(d,l)&:=& \frac{\big(d(l-1)-1\big)}{(d-1)(l-1)}\\
&&\cdot \frac{2\sqrt{d(l-1)-1}+(l-2)+\sqrt{l-2}\sqrt{l+2+
4\sqrt{d(l-1)-1}}}{2\sqrt{d(l-1)-1}+2(l-2)+2\sqrt{l-2}\sqrt{l-2+
2\sqrt{d(l-1)-1}}}
\end{eqnarray*}
Define $\rho'$ as the the mixing rate of a non-backtracking random walk on $G$, then if $\lambda:=\max_{\lambda_2,\lambda_n}|\lambda_i-(l-2)|\ge 2\sqrt{(d-1)(l-1)}$, then we have the following 5 cases: 
\begin{itemize}
\item Case 1: $|\lambda_2-(l-2)|\ge 2\sqrt{(d-1)(l-1)}$ and $|\lambda_2-(l-2)|\ge (l-2)-\lambda_n$, then the ratio:
\begin{eqnarray}\label{Case1Bd}
A(d,l)\le \frac{\widetilde{\rho}}{\rho'}\le B(d,l)
\end{eqnarray}
\item Case 2: $\lambda_n\in [-2\sqrt{d(l-1)-1}, l-2-2\sqrt{(d-1)(l-1)}]$; $\lambda_2\le 2\sqrt{d(l-1)-1}$, then the ratio: 
\begin{eqnarray}\label{Case2Bd}
C(d,l)\le \frac{\widetilde{\rho}}{\rho'}\le D(d,l) 
\end{eqnarray}
\item Case 3: $\lambda_n\in [-2\sqrt{d(l-1)-1}, l-2-2\sqrt{(d-1)(l-1)}]$; $\lambda_2\in[ 2\sqrt{d(l-1)-1},(l-2)+2\sqrt{(d-1)(l-1)}]$, then the ratio:
\begin{eqnarray}\label{Case3Bd}
A(d,l)\le \frac{\widetilde{\rho}}{\rho'}\le D(d,l) 
\end{eqnarray}
\item Case 4: $l\le d/4+1/d+1$; $\lambda_n\le -2\sqrt{d(l-1)-1}$; $|\lambda_n|\ge |\lambda_2|$, then the ratio:
\begin{eqnarray}\label{Case4Bd}
E(d,l)\le \frac{\widetilde{\rho}}{\rho'}\le D(d,l) 
\end{eqnarray}
\item Case 5: $l\le d/4+1/d+1$; $\lambda_n\le -2\sqrt{d(l-1)-1}$; $|\lambda_n|\le |\lambda_2|$ and $|\lambda_2-(l-2)|\le (l-2)-\lambda_n$ then the ratio:
\begin{eqnarray}\label{Case5Bd}
F(d,l)\le \frac{\widetilde{\rho}}{\rho'}\le D(d,l) 
\end{eqnarray}
\end{itemize}
Moreover, we have
\begin{eqnarray*}
A(d,l)\le B(d,l)\le 1 \le  C(d,l)\le D(d,l)
\end{eqnarray*} 
Furthermore if $d$ and $l$ satisfy the prerequisite of Case 4 and Case 5 (i.e. $l\le d/4+1/d+1$), we have
\begin{eqnarray*}
A(d,l)\le F(d,l)\le B(d,l)\le 1 \le  C(d,l)\le E(d,l)\le D(d,l)
\end{eqnarray*}
\end{corollary}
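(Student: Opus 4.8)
The plan is to unwind both mixing rates into their spectral closed forms, reduce each of the five cases to an optimization over a single eigenvalue, read off the six constants as the extreme values, and finally verify the two chains of inequalities by direct computation.

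\emph{Setup.} By Corollary~\ref{maincor1}(1) (the case $d\ge l$), $\tilde{\rho}=\frac{1}{\sqrt{(d-1)(l-1)}}\psi\!\big(\frac{\lambda}{2\sqrt{(d-1)(l-1)}}\big)$ with $\lambda=\max_{i=2,n}|\lambda_i-(l-2)|$, and the standing hypothesis $\lambda\ge 2\sqrt{(d-1)(l-1)}$ puts this in the branch $\psi(t)=t+\sqrt{t^2-1}$. Since $G$ is $d(l-1)$-regular, a non-backtracking walk on $G$ is the $l=2$ instance of Corollary~\ref{maincor1} applied to $G$ (equivalently Theorem~1.1 of \cite{ABLS}), so $\rho'=\frac{1}{\sqrt{d(l-1)-1}}\psi\!\big(\frac{\lambda'}{2\sqrt{d(l-1)-1}}\big)$ with $\lambda'=\max_{i=2,n}|\lambda_i|$. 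It is convenient to set $f_m(x):=\frac{1}{\sqrt m}\psi\!\big(\frac{x}{2\sqrt m}\big)$, which is nondecreasing, equals $1/\sqrt m$ on $[0,2\sqrt m]$ and $\frac{x+\sqrt{x^2-4m}}{2m}$ on $[2\sqrt m,\infty)$; thus $\tilde{\rho}=f_{(d-1)(l-1)}(\lambda)$ and $\rho'=f_{d(l-1)-1}(\lambda')$. The arithmetic fact underlying the whole corollary is $d(l-1)-1-(d-1)(l-1)=l-2\ge 0$.

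\emph{Case-by-case reduction.} In each case I would first pin down which of $\lambda_2,\lambda_n$ realizes $\lambda$ and which realizes $\lambda'$, and on which branch of $\psi$ each of $f_{(d-1)(l-1)},f_{d(l-1)-1}$ is evaluated; the sign and size facts needed are exactly those isolated in the proof of Corollary~\ref{cor:ComparisonBtwCliqueNBRWSimpleRW}. In Case~1 one gets $\lambda=\lambda_2-(l-2)$ with $\lambda_2\ge l$ and $\lambda'=\lambda_2$; in Cases~2--5 one gets $\lambda=|\lambda_n|+(l-2)$, with $\lambda'=\lambda_2$ in Cases~2,3,5 (and $\lambda_2\le 2\sqrt{d(l-1)-1}$ in Case~2 forcing $\rho'=1/\sqrt{d(l-1)-1}$) and $\lambda'=|\lambda_n|$ in Case~4. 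The extra hypothesis $l\le d/4+1/d+1$ in Cases~4,5 is precisely $d^2-4(d(l-1)-1)=(d+2)^2-4dl\ge 0$, which makes $E(d,l)$ real and puts $\rho'$ in the branch $\psi(t)=t+\sqrt{t^2-1}$ at the extreme eigenvalue value $d$. Once the branches are fixed, $\tilde{\rho}/\rho'$ is an explicit function of a single eigenvalue on the interval prescribed by the case (in Cases~4 and 5 it is monotone in each eigenvalue argument separately, so it suffices to evaluate at the relevant corner of the feasible set), and the asserted constants are its minimum and maximum. The model computation is Case~2: $\tilde{\rho}/\rho'=\sqrt{\tfrac{d(l-1)-1}{(d-1)(l-1)}}\psi\!\big(\tfrac{|\lambda_n|+(l-2)}{2\sqrt{(d-1)(l-1)}}\big)$ is increasing in $|\lambda_n|$, so it runs from its value at $|\lambda_n|=2\sqrt{(d-1)(l-1)}-(l-2)$, where $\psi=1$ and it equals $C(d,l)$, up to its value at $|\lambda_n|=2\sqrt{d(l-1)-1}$, which after the identity $(2\sqrt{d(l-1)-1}+(l-2))^2-4(d-1)(l-1)=(l-2)(l+2+4\sqrt{d(l-1)-1})$ equals $D(d,l)$. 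When $\tilde{\rho}$ and $\rho'$ are governed by different eigenvalues I would substitute $\lambda=2\sqrt{(d-1)(l-1)}\cosh\theta_1$ and $\lambda'=2\sqrt{d(l-1)-1}\cosh\theta_2$, so that $\tilde{\rho}/\rho'=\sqrt{\tfrac{d(l-1)-1}{(d-1)(l-1)}}\,e^{\theta_1-\theta_2}$; since $\frac{d}{d\lambda_2}(\theta_1-\theta_2)$ changes sign only at $\lambda_2=(l+2)/2$, which lies at or below the left endpoint of every Case~1,3,5 interval, the ratio is monotone on that interval, its extrema occur at the endpoints, and simplifying the radicals there (using identities of the shape $(x+(l-2))^2-4(d-1)(l-1)$ and $x^2-4(d(l-1)-1)$ factoring out $l-2$) yields $A,B,D,E,F$.

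\emph{The chains $A\le B\le 1\le C\le D$ and $A\le F\le B\le 1\le C\le E\le D$.} Each link is an inequality between two of these constants, hence between two explicit functions of the integers $d\ge l\ge 2$; after clearing the positive denominators and isolating the single nested radical on each side it reduces to a polynomial inequality that can be checked directly. The trivial ones are $B\le 1$ and $1\le C$, both immediate from $(d-1)(l-1)\le d(l-1)-1$; $A\le B$ and $C\le D$ follow from $\psi\ge 1$; and the refined chain additionally needs comparisons among $\sqrt{l-6+4\sqrt{(d-1)(l-1)}}$, $\sqrt{l+2+4\sqrt{d(l-1)-1}}$ and $\sqrt{d^2-4(d(l-1)-1)}$, each of which becomes monotone after one squaring. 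The hypothesis $l\le d/4+1/d+1$ is used throughout Cases~4 and 5 to keep $d^2-4(d(l-1)-1)$ nonnegative.

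\emph{Main obstacle.} The crux is the single-variable optimizations in Cases~1, 3 and 5, where $\tilde{\rho}$ and $\rho'$ are driven by different eigenvalues (or $\rho'$ straddles the branch point of $\psi$): one must verify with care that $\tilde{\rho}/\rho'$ attains its extrema at the stated endpoints of the eigenvalue interval and nowhere inside it — in particular its behaviour as the governing eigenvalue approaches the connectivity bound $d(l-1)$ needs checking — which is exactly the sign analysis of $\frac{d}{d\lambda_2}(\theta_1-\theta_2)$ above. A secondary nuisance is the repeated, sign-sensitive squaring needed for the nested-radical inequalities in the refined chain $A\le F\le B\le 1\le C\le E\le D$.
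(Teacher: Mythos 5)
Your proposal matches the paper's proof in all essentials: the same spectral closed forms for $\tilde{\rho}$ and $\rho'$, the same five-case identification of which eigenvalue and which branch of $\psi$ governs each rate, monotonicity of the ratio in the governing eigenvalue (your $\cosh$-substitution criterion "$\frac{d}{d\lambda_2}(\theta_1-\theta_2)$ changes sign at $\lambda_2=(l+2)/2$" is exactly the paper's condition $\sqrt{x^2-c}\ge\sqrt{(x-a)^2-b}\Leftrightarrow x\ge \frac{l-2}{2}+2$ for its function $f$), endpoint evaluation producing $A,\dots,F$, and direct radical algebra for the two chains. The only spots where you compress genuine work are the Case 5 lower bound, where the coupling $\lambda_2\le|\lambda_n|+2(l-2)$ means "evaluate at the corner" is not enough and one must additionally show the edge function $g(x)=\frac{d(l-1)-1}{(d-1)(l-1)}f(x+2(l-2))$ is increasing (which your $\theta_1-\theta_2$ analysis does supply), and the link $F\le B$, which the paper cannot reduce to a single polynomial inequality but handles via a case split on whether $(d-1)(l-1)$ lies near $l$ together with the prerequisite $l\le d/4+1/d+1$ to eliminate the exceptional small pairs $(d,l)$.
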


\begin{rk}\label{resonablecase45}
Because $\lambda_n\ge -d$, so for Case 4 and Case 5, in order to make that $\lambda_n\le -2\sqrt{d(l-1)-1}$ reasonable, we need the prerequisite that 
$$
l\le \frac{d}{4}+\frac{1}{d}+1
$$
to make $-d\le -2\sqrt{d(l-1)-1}$.
\end{rk}

Before proving Corollary \ref{cor:ComparisonBtwCNBRWandNBRW}, we need to prove that our classification in Corollary \ref{cor:ComparisonBtwCNBRWandNBRW} is reasonable.

\begin{claim}\label{Claim}
We have 
\begin{eqnarray*}
&&-2\sqrt{d(l-1)-1}\le (l-2)-2\sqrt{(d-1)(l-1)}\le l-2\\
&&< 2\sqrt{d(l-1)-1}\le (l-2)+2\sqrt{(d-1)(l-1)}
\end{eqnarray*}
which means that the five cases are reasonable.
\end{claim}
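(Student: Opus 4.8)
The plan is to reduce the whole chain to a single algebraic identity. Write $a:=\sqrt{(d-1)(l-1)}$ and $b:=\sqrt{d(l-1)-1}$, so that the quantities to be ordered are $-2b$, $(l-2)-2a$, $l-2$, $2b$, and $(l-2)+2a$. The key observation is that
$b^2-a^2=\big(d(l-1)-1\big)-(d-1)(l-1)=(l-1)-1=l-2\ge 0$,
so $0\le a\le b$ and moreover $(b-a)(a+b)=l-2$. Everything else is elementary bookkeeping with $a$ and $b$, using only $d\ge l\ge 2$.

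First I would dispatch the two easy links. The bound $(l-2)-2a\le l-2$ is just $a\ge 0$. The bound $l-2<2b$ follows by squaring (both sides are nonnegative): $4b^2-(l-2)^2=4\big(d(l-1)-1\big)-(l-2)^2\ge 4\big(l(l-1)-1\big)-(l-2)^2=3l^2-8>0$ for $l\ge 2$, where I used $d\ge l$. Next, for the leftmost inequality $-2b\le (l-2)-2a$, I rewrite it as $2(a-b)\le l-2$: the left side is $\le 0$ because $a\le b$, and the right side is $\ge 0$, so it holds. Finally, for the rightmost inequality $2b\le (l-2)+2a$, I rewrite it as $2(b-a)\le l-2$ and substitute $b-a=(l-2)/(a+b)$: if $l=2$ both sides vanish, and if $l\ge 3$ the inequality becomes $a+b\ge 2$, which is clear since $a=\sqrt{(d-1)(l-1)}\ge l-1\ge 2$.

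Combining the four steps yields the stated chain, which is exactly the assertion that the thresholds appearing in the five cases of Corollary \ref{cor:ComparisonBtwCNBRWandNBRW} are correctly nested. There is essentially no obstacle here; the only points where I would take care are the degenerate case $l=2$, in which $a=b$ and the two outer inequalities become equalities, and the strictness in $l-2<2b$ — both are controlled by the identity $b^2-a^2=l-2$ together with the crude estimate $3l^2-8>0$. Note that the integrality of $d$ and $l$ is never used beyond $d\ge l\ge 2$.
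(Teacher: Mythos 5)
Your proof is correct, and for the one genuinely nontrivial link in the chain it takes a different route from the paper. The paper handles the rightmost inequality $2\sqrt{d(l-1)-1}\le (l-2)+2\sqrt{(d-1)(l-1)}$ by a monotonicity argument: it shows $2\sqrt{d(l-1)-1-x}+x\ge 2\sqrt{d(l-1)-1}$ for all $x\in[0,l-2]$ by checking equality at $x=0$ and that the derivative of the left side is nonnegative (since $\frac{1}{\sqrt{d(l-1)-1-x}}\le\frac{1}{\sqrt{(d-1)(l-1)}}\le 1$), then sets $x=l-2$. You instead exploit the algebraic identity $b^2-a^2=l-2$ for $a=\sqrt{(d-1)(l-1)}$, $b=\sqrt{d(l-1)-1}$, so that $b-a=(l-2)/(a+b)$ and the inequality collapses to $a+b\ge 2$ (for $l\ge 3$; the case $l=2$ gives equality). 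Your route is purely algebraic, makes the degenerate case $l=2$ (where $a=b$ and the two outer inequalities become equalities) completely transparent, and the same identity also gives the leftmost inequality for free. It additionally sidesteps a small slip in the paper's justification of the third inequality: the paper asserts $2\sqrt{l(l-1)-1}\ge 2l$, which is false (e.g.\ at $l=2$), although the intended conclusion $2\sqrt{d(l-1)-1}>l-2$ is of course true and your squaring argument ($4b^2-(l-2)^2\ge 3l^2-8>0$) establishes it cleanly. The paper's calculus argument is arguably more robust to perturbing the two radicands, but for this specific claim your version is shorter and self-contained.
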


\begin{proof}[Proof of Claim \ref{Claim}]
The first inequality is by the fact that
$$
2\sqrt{d(l-1)-1}\ge 2\sqrt{(d-1)(l-1)}\ge 2\sqrt{(d-1)(l-1)}-(l-2)
$$
The third inequality is by the fact that
$$
2\sqrt{d(l-1)-1}\ge 2\sqrt{l(l-1)-1}\ge 2l> l-2
$$
To see the last inequality, we will prove that for every $x\in [0,l-2]$:
\begin{equation}\label{eq:claimineq}
2\sqrt{d(l-1)-1-x}+x\ge 2\sqrt{d(l-1)-1}
\end{equation}
Then setting $x=l-2$. In fact, we have the left hand side equals to the right hand side when $x=0$, then (\ref{eq:claimineq}) holds if the derivative of the left hand side is larger tan 0. This is clear since
$$
\frac{1}{\sqrt{d(l-1)-1-x}}\le \frac{1}{\sqrt{d(l-1)-1-(l-2)}}=\frac{1}{\sqrt{(d-1)(l-1)}}\le 1
$$
which provides Claim \ref{Claim}. 
\end{proof}

\begin{proof} [Proof of Corollary \ref{cor:ComparisonBtwCNBRWandNBRW}]
As showed in \cite{ABLS}, 
\begin{equation}\label{eq:1}
\rho'=\psi\Big(\frac{\lambda'}{2\sqrt{d(l-1)-1}}\Big)/\sqrt{d(l-1)-1}
\end{equation}
where $\psi$ is the same as our definition of $\psi$.

1. For case 1, we claim that case 1 implies that $\lambda_2\ge l-2$. Because if otherwise, then we have $(l-2)-\lambda_2\ge (l-2)-\lambda_n$, therefore $\lambda_2\le \lambda_n$, which means $\lambda_2=\lambda_n$, contradicts to the fact that $G$ is not complete. Therefore we have $\lambda_2-(l-2)\ge |\lambda_n|+(l-2)$, so $\lambda_2\ge |\lambda_n|$ and $\lambda_2\ge 2\sqrt{(d-1)(l-1)}+(l-2)\ge 2\sqrt{d(l-1)-1}$ by Claim \ref{Claim}. Thus we have
\begin{eqnarray}
&&\widetilde{\rho}=\frac{\lambda_2-(l-2)+\sqrt{\big(\lambda_2-
(l-2)\big)^2-4(d-1)(l-1)}}{2(d-1)(l-1)}\label{eq:case1tilderho}\\
&&\rho'=\frac{\lambda_2+\sqrt{\lambda_2^2-4\big(d(l-1)-1\big)}}{2\big(d(l-1)-1\big)}\label{eq:case1rho}
\end{eqnarray}
We claim that $\widetilde{\rho}/\rho'$ is increasing in $\lambda_2$. In fact, we will show that for $a:=l-2$, $b:=4(d-1)(l-1)$, $c:=4(d(l-1)-1)$:
\begin{equation}\label{eq:defoff}
f(x):=\frac{(x-a)+\sqrt{(x-a)^2-b}}{x+\sqrt{x^2-c}}
\end{equation}
is increasing on $x\ge 2\sqrt{(d-1)(l-1)}+(l-2)$. Consider the numerator of the derivative of $f(x)$:
\begin{eqnarray*}
&&\Big(1+\frac{x-a}{\sqrt{(x-a)^2-b}}\Big)\big(x+\sqrt{x^2-c}\big)\\
&&-\big((x-a)+\sqrt{(x-a)^2-b}\big)\Big(1+\frac{x}{\sqrt{x^2-c}}\Big)\\
&=&a\Big(1-\frac{\sqrt{\lambda^2-c}}{\sqrt{(\lambda-a)^2-b}}\Big)+\big(\sqrt{x^2-c}-\sqrt{(x-a)^2-b}\big)\\
&&+x(x-a)\Big(\frac{1}{\sqrt{(x-a)^2-b}}-\frac{1}{\sqrt{x^2-c}}\Big)+x\Big(\frac{\sqrt{x^2-c}}{\sqrt{(x-a)^2-b}}-\frac{\sqrt{(x-a)^2-b}}{\sqrt{x^2-c}}\Big)\\
&=&\big(\sqrt{x^2-c}-\sqrt{(x-a)^2-b}\big)\bigg(1-\frac{a}{\sqrt{(x-a)^2-b}}+\frac{x(x-a)}{\sqrt{(x-a)^2-b}\cdot \sqrt{x^2-c}}\\
&&+x\cdot \Big(\frac{\sqrt{x^2-c}+\sqrt{(x-a)^2-b}}{\sqrt{(x-a)^2-b}\cdot \sqrt{x^2-c}}\Big)\bigg)\\
&=&\frac{\sqrt{x^2-c}-\sqrt{(x-a)^2-b}}{\sqrt{x^2-c}\cdot \sqrt{(x-a)^2-b}}\Big(x(x-a)+(x-a)\sqrt{x^2-c}+x\sqrt{(x-a)^2-b}\Big)\\
&&+\big(\sqrt{x^2-c}-\sqrt{(x-a)^2-b}\big)
\end{eqnarray*}
Furthermore we claim that 
\begin{eqnarray}
&&x\ge a \label{eq:xgea}\\
&&\sqrt{x^2-c}-\sqrt{(x-a)^2-b}\ge 0\label{eq:sqrtge0}
\end{eqnarray}
therefore the last equation is non-negative, which provides our previous claim. In fact, (\ref{eq:xgea}) comes from the fact that $x\ge 2\sqrt{(d-1)(l-1)}+(l-2)\ge l-2=a$ by Claim \ref{Claim}. For (\ref{eq:sqrtge0}), note that
\begin{equation}\label{eq:rangeofxincre}
\sqrt{x^2-c} \ge \sqrt{(x-a)^2-b}\Leftrightarrow x^2-c\ge (x-a)^2-b\Leftrightarrow  x\ge \frac{a}{2}-\frac{b}{2a}+\frac{c}{2a}=\frac{l-2}{2}+2
\end{equation}
This is clear since $x\ge 2\sqrt{(d-1)(l-1)}+(l-2)>2+\frac{l-2}{2}$. Thus, $\widetilde{\rho}/\rho'$ attains its maximum at $\lambda_2=d(l-1)-1$ (since $G$ is not complete) and its minimum at $\lambda_2= 2\sqrt{(d-1)(l-1)}+(l-2)$, which provides (\ref{Case1Bd}). To see the upper bound $B(d,l)$ is less than 1, just note that the function $g(x):=1+\sqrt{1-4/x}$ is increasing in $x$ and the fact that $(d-1)(l-1)\le d(l-1)-1$. 

2. For case 2, we have that both $|\lambda_n|$ and $|\lambda_2|$ are no-larger than $2\sqrt{d(l-1)-1}$. And $\lambda_2-(l-2)\le 2\sqrt{(d-1)(l-1)}$ if $\lambda_2\ge l-2$ by Claim 3.7. If $\lambda_2\le l-2$, then $|\lambda_2-(l-2)|=(l-2)-\lambda_2\le (l-2)-\lambda_n=|\lambda_n-(l-2)|$. And $|\lambda_n-(l-2)|\ge 2\sqrt{(d-1)(l-1)}$. So we have the mixing rates:
\begin{eqnarray}
&&\widetilde{\rho}=\frac{|\lambda_n|+(l-2)+\sqrt{(|\lambda|_n+(l-2)|)^2-4(d-1)(l-1)}}{2(d-1)(l-1)}\label{eq:case2tilderho}\\
&&\rho'=\frac{1}{\sqrt{d(l-1)-1}}\label{eq:case2rho}
\end{eqnarray}
therefore $\widetilde{\rho}/\rho'$ attains its maximum at $|\lambda_n|=2\sqrt{d(l-1)-1}$ and its minimum at $|\lambda_n|=2\sqrt{(d-1)(l-1)}-(l-2)$, which provides (\ref{Case2Bd}). 

\begin{rk}
To get a simpler upper bound of $D(d,l)$, note that
\begin{eqnarray*}
D(d,l)&\le & \sqrt{d(l-1)-1}\frac{2|\lambda_n|+2(l-2)}{2(d-1)(l-1)}\Bigg|_{|\lambda_n|=2\sqrt{d(l-1)-1}}\\
&=& \frac{2\big(d(l-1)-1\big)+(l-2)\sqrt{d(l-1)-1}}{(d-1)(l-1)}
\end{eqnarray*}
where the inequality is by the fact that $\sqrt{\big(|\lambda_n|+(l-2)\big)^2-\cdots}\le |\lambda_n|+(l-2)$, and this bound is simpler than $D(d,l)$ but is not sharp. because if $l=2$, then the $D(d,2)$ is 1, but the this bound is 2. 
\end{rk}

3. For case 3, we have $|\lambda_n-(l-2)|\ge |\lambda_2-(l-2)|$ and $|\lambda_n|\le \lambda_2$. Thus we have
\begin{eqnarray}
&&\widetilde{\rho}=\frac{|\lambda_n|+(l-2)+\sqrt{\big(|\lambda_n|+(l-2)\big)^2-4(d-1)(l-1)}}{2(d-1)(l-1)}\label{eq:case3tilderho}\\
&&\rho'=\frac{\lambda_2+\sqrt{\lambda_2^2-4(d(l-1)-1)}}{2\big(d(l-1)-1\big)}\label{eq:case3rho}
\end{eqnarray}
therefore $\widetilde{\rho}/\rho'$ attains its maximum at $|\lambda_n|=2\sqrt{d(l-1)-1}$, $\lambda_2=2\sqrt{d(l-1)-1}$ and its minimum at $|\lambda_n|=2\sqrt{(d-1)(l-1)}-(l-2)$, $\lambda_2=l-2+2\sqrt{(d-1)(l-1)}$. Moreover, since (\ref{eq:case2tilderho}) and (\ref{eq:case2rho}) hold at $|\lambda_n|=2\sqrt{d(l-1)-1}$ and $\lambda_2=2\sqrt{d(l-1)-1}$, thus the corresponding discussion in Case 2 is valid, which yields the same upper bound with Case 2. For the lower bound, we have that at $|\lambda_n|=2\sqrt{(d-1)(l-1)}-(l-2)$, $\lambda_2=l-2+2\sqrt{(d-1)(l-1)}$,
\begin{eqnarray*}
&&\widetilde{\rho}=\frac{1}{\sqrt{(d-1)(l-1)}}
\end{eqnarray*}
which is equivalent to the lower bound of Case 1. Therefore Case 3 shares the same lower bound with Case 1.

4. For case 4, we firstly claim that $|\lambda_n-(l-2)|\ge |\lambda_2-(l-2)|$. To see this, if $\lambda_2\ge l-2$, we already have $|\lambda_n|\ge \lambda_2$, therefore
$$
|\lambda_n-(l-2)|=|\lambda_n|+(l-2)>\lambda_2-(l-2)=|\lambda_2-(l-2)|
$$
If $\lambda_2\le l-2$, we have
$$
|\lambda_n-(l-2)|=(l-2)-\lambda_n\ge (l-2)-\lambda_2=|\lambda_2-(l-2)|
$$
Moreover, we have $|\lambda_n-(l-2)|\ge 2\sqrt{(d-1)(l-1)}$ and $|\lambda_n|\ge 2\sqrt{d(l-1)-1}$, so we have the mixing rates: 
\begin{eqnarray}
&&\widetilde{\rho}=\frac{|\lambda_n|+(l-2)+\sqrt{\big(|\lambda_n|+(l-2)\big)^2-4(d-1)(l-1)}}{2(d-1)(l-1)}\label{eq:case4tilderho}\\
&&\rho'=\frac{|\lambda_n|+\sqrt{\lambda_n^2-4(d(l-1)-1)}}{2\big(d(l-1)-1\big)}\label{eq:case4rho}
\end{eqnarray}
We claim that $\widetilde{\rho}/\rho'$ is decreasing in $|\lambda_n|$. In fact, we will show that for $a:=l-2$, $b:=4(d-1)(l-1)$, $c:=4(d(l-1)-1)$, the function defined by:
\begin{equation}\label{eq:f(x)oflamedan}
f(x):=\frac{(x+a)+\sqrt{(x+a)^2-b}}{x+\sqrt{x^2-c}}
\end{equation}
is decreasing on $x\ge 2\sqrt{d(l-1)-1}$. Consider the numerator of the derivative of $f(x)$:
\begin{eqnarray*}
&&\Big(1+\frac{a}{\sqrt{(x+a)^2-b}}+\frac{x}{\sqrt{x^2-c}}+\frac{x}{\sqrt{(x+a)^2-b}}+\frac{x(x+a)}{\sqrt{(x+a)^2-b}\sqrt{x^2-c}}\Big)\\
&&\cdot \big(\sqrt{x^2-c}-\sqrt{(x+a)^2-b}\big)
\end{eqnarray*}
Furthermore we claim that $\sqrt{x^2-c}-\sqrt{(x+a)^2-b}\le 0$ therefore the numerator is non-positive, which provides our previous claim. To see this, note that
\begin{eqnarray*}
\sqrt{x^2-c}\le \sqrt{(x+a)^2-b}\Longleftrightarrow x\ge \frac{b-c-a^2}{2a}
\end{eqnarray*}
This is true since the last term is negative. Thus $\widetilde{\rho}/\rho'$ attains its maximum at $|\lambda_n|=2\sqrt{d(l-1)-1}$ and its minimum at $|\lambda_n|=d$, which provides (\ref{Case4Bd}). Moreover, (\ref{eq:case2tilderho}) and (\ref{eq:case2rho}) hold at $|\lambda_n|=2\sqrt{d(l-1)-1}$, this yields the same upper bound with Case 2. To see $E(d,l)\ge C(d,l)$, we treat $\widetilde{\rho}/\rho'$ where $\widetilde{\rho}$ defined by (\ref{eq:case4tilderho}) and $\rho'$ by (\ref{eq:case4rho}) as a function, $F$, of $|\lambda_n|$. Thus $F$ is decreasing on $[2\sqrt{d(l-1)-1},\infty)$ by (\ref{eq:f(x)oflamedan}). Moreover for this case we have that $d\ge 2\sqrt{d(l-1)-1}$ by the prerequisite and Remark \ref{resonablecase45}. Therefore we have
$$
E(d,l)=F(d)\ge \lim_{x\rightarrow\infty} F(x)=\frac{d(l-1)-1}{(d-1)(l-1)}\ge \frac{\sqrt{d(l-1)-1}}{\sqrt{(d-1)(l-1)}}=C(d,l)
$$
where the last inequality is by the fact that $d(l-1)-1\ge (d-1)(l-1)$.

5. For case 5, we firstly claim that Case 5 implies $\lambda_2\ge l-2$. In fact, $|\lambda_n|\le |\lambda_2|$ implies $\lambda_2\ge 0$, otherwise we have $\lambda_n\ge\lambda_2$, contradicts to the fact that $G$ is not complete. Then suppose $\lambda_2\le l-2$, we have that $\lambda_n\le -2\sqrt{d(l-1)-1}\Rightarrow|\lambda_n|\ge 2\sqrt{d(l-1)-1}$. On the other hand $|\lambda_n|\le \lambda_2\le l-2$, so we have $2\sqrt{d(l-1)-1}\le l-2$, which contradicts to Claim \ref{Claim}. So we have the mixing rates:
\begin{eqnarray}
&&\widetilde{\rho}=\frac{|\lambda_n|+(l-2)+\sqrt{\big(|\lambda_n|+(l-2)\big)^2-4(d-1)(l-1)}}{2(d-1)(l-1)}\\
&&\rho'=\frac{\lambda_2+\sqrt{\lambda_2^2-4(d(l-1)-1)}}{2\big(d(l-1)-1\big)}
\end{eqnarray}
Therefore $\widetilde{\rho}/\rho'\le \widetilde{\rho}(|\lambda_n|)/\rho'(|\lambda_n|)$, similar to Case 4, we have the same upper bound $D(d,l)$ with Case 4. For the lower bound, we have $\widetilde{\rho}/\rho'\ge \widetilde{\rho}(|\lambda_n|)/\rho'\big(|\lambda_n|+2(l-2)\big)$ by the fact that $\lambda_2-(l-2)\le |\lambda_n|+(l-2)$. We claim that $\widetilde{\rho}(|\lambda_n|)/\rho'\big(|\lambda_n|+2(l-2)\big)$ is increasing in $|\lambda_n|$. In fact, we define 
$$
g(x):=\frac{d(l-1)-1}{(d-1)(l-1)}f(x+2a)
$$ where $f(x)$ is defined in (\ref{eq:defoff}). Therefore 
\begin{eqnarray*}
&&g(|\lambda_n|)=\frac{d(l-1)-1}{(d-1)(l-1)} f(|\lambda_n|+2a)\\
&=&\frac{d(l-1)-1}{(d-1)(l-1)}\cdot \frac{|\lambda_n|+a+\sqrt{(|\lambda_n|+a)^2-b}}{|\lambda_n|+2a+\sqrt{(|\lambda_n|+2a)^2-c}}\\
&=&\frac{\frac{|\lambda_n|+(l-2)+\sqrt{\big(|\lambda_n|+(l-2)\big)^2-4(d-1)(l-1)}}{2(d-1)(l-1)}}{\frac{|\lambda_n|+2(l-1)+\sqrt{\big(|\lambda_n|+2(l-1)\big)^2-4\big(d(l-1)-1\big)}}{2\big(d(l-1)-1\big)}}\\
&=&\frac{\widetilde{\rho}(|\lambda_n|)}{\rho'\big(|\lambda_n|+2(l-2)\big)}
\end{eqnarray*}
We have that $f(x)$ is increasing on $[2\sqrt{(d-1)(l-1)}+(l-2),\infty)$, therefore $g(x)$ is increasing on $[2\sqrt{(d-1)(l-1)}-(l-2),\infty)$. And $|\lambda_n|\in [2\sqrt{d(l-1)-1},d]\subset[2\sqrt{(d-1)(l-1)}-(l-2),\infty)$ by Claim 11. Therefore it attains its minimum at $|\lambda_n|=2\sqrt{d(l-1)-1}$, which provides (\ref{Case5Bd}). To see that $A(d,l)\le F(d,l)$, note that 
\begin{eqnarray*}
&&A(d,l)=\frac{d(l-1)-1}{(d-1)(l-1)}f\big(2\sqrt{(d-1)(l-1)}+(l-2)\big)\\
&=& g\big(2\sqrt{(d-1)(l-1)}-(l-2)\big)\le g\big(2\sqrt{d(l-1)-1}\big)=F(d,l)
\end{eqnarray*}
On the other hand, in order to prove $F(d,l)\le B(d,l)$, it suffices to prove that $F(d,l)\le B(d,l)$ for $l\ge 3$, since if $l=2$, then $F(d,l)= B(d,l)=1$. We firstly assume that
$$
d\notin (\frac{l-2\sqrt{2l-3}}{l-1}+1,\frac{l+2\sqrt{2l-3}}{l-1}+1)
$$
which means $(d-1)(l-1)\notin(l-2\sqrt{2l-3},l+2\sqrt{2l-3})$, assume $x_1:=(d-1)(l-1)$; $x_2:=l-2$, we have
\begin{eqnarray*}
&&x_1\notin (x_2+2-2\sqrt{2x_2+1},x_2+2+2\sqrt{2x_2+1})\\
&&\Longleftrightarrow x_1^2-(2x_2+4)x_1+x_2^2-4x_2\ge 0\\
&&\Longleftrightarrow (x_1-x_2)^2\ge 4(x_1+x_2)\\
&&\Longleftrightarrow x_1-x_2\geq 2\sqrt{x_1+x_2}\\
&&\Longleftrightarrow d(l-1)-1-2(l-2)\ge 2\sqrt{d(l-1)-1}
\end{eqnarray*}
Therefore we have
\begin{eqnarray*}
&&B(d,l)=\frac{d(l-1)-1}{(d-1)(l-1)}f\big(d(l-1)-1\big)\\
&=&g\big(d(l-1)-1-2(l-2)\big)\ge g(2\sqrt{d(l-1)-1})=F(d,l)
\end{eqnarray*}
To see the last inequality, we have that $f(x)$ is actually increasing on 
$$
[\max\{l-2,~\frac{l-2}{2}+2\},~\infty)
$$ 
by (\ref{eq:xgea}) and (\ref{eq:rangeofxincre}) in the proof of Case 1. And it is easy to verify that $2\sqrt{d(l-1)-1}+2(l-2)\ge \max\{l-2,~\frac{l-2}{2}+2\}$, therefore $f(x)$ is increasing on $[2\sqrt{d(l-1)-1}+2(l-2),\infty)$, hence $g(x)$ is increasing on $[2\sqrt{d(l-1)-1},\infty)$. Moreover, since $(l+2\sqrt{2l-3})/(l-1)+1$ is decreasing for $l\ge 3$, to see this, substituting $l$ by $k+1$ with $k\ge 2$, we have
$$
\frac{l+2\sqrt{2l-3}}{l-1}+1=2+\frac{1}{k}+2\sqrt{\frac{2}{k}-\frac{1}{k^2}}
$$
so we have 
$$
\max_{l\ge 3}\frac{l+2\sqrt{2l-3}}{l-1}+1=\frac{5}{2}+\sqrt{3}<5
$$
which means that if $d\ge 5$, then $(d-1)(l-1)\notin[l-2\sqrt{2l-3},l+2\sqrt{2l-3}]$, repeat our discussion we get $A(d,l)\le F(d,l)\le B(d,l) $. If $d< 5$, we have only three cases: $d=4$, $l=4$ or $d=4$, $l=3$ or $d=3$, $l=3$. However none of these cases satisfies the condition $l\le d/4+1/d+1$, which proves the bound $A(d,l)\le F(d,l)\le B(d,l)$.
\end{proof}

\begin{rk}
If we set $l=2$ in Corollary \ref{cor:ComparisonBtwCNBRWandNBRW}, then all upper bounds and lower bounds reduce to 1.
\end{rk}

\begin{rk}
From Corollary \ref{cor:ComparisonBtwCNBRWandNBRW}, we have that the cliquewise non-backtracking random walk mixes faster than non-backtracking random walk in Case 1 and slower in Case 2 and Case 4. 
\end{rk}

\begin{corollary}\label{cor:Comparisonoftilderhorhorho'}
Based on the conditions of Corollary \ref{maincor1}, assume $\rho$ to be the mixing rate of a simple random walk on $G$, $\rho'$ to be the mixing rate of a non-backtracking random walk on $G$, then if $\lambda:=\max_{\lambda_2,\lambda_n}|\lambda_i-(l-2)|\le 2\sqrt{(d-1)(l-1)}$ and $d(l-1)=n^{o(1)}$ as $n\rightarrow\infty$, then
\begin{align}
&\frac{\widetilde{\rho}}{\rho}\le \frac{d(l-1)}{2(d-1)(l-1)(1-o(1))+(l-2)\sqrt{(d-1)(l-1)}}\le \frac{d}{2(d-1)}(1+o(1))\label{eq:tilderhooverrhoupp}\\
&\frac{\widetilde{\rho}}{\rho}\ge 
\frac{d(l-1)}{2(d-1)(l-1)+(l-2)\sqrt{(d-1)(l-1)}}\label{eq:tilderhooverrholower}\\
&A(d,l)\le \frac{\widetilde{\rho}}{\rho'}\le C(d,l)\label{eq:tilderhooverrho'}
\end{align}
where $A(d,l)$ and $C(d,l)$ are defined in Corollary \ref{cor:ComparisonBtwCNBRWandNBRW}. 
\end{corollary}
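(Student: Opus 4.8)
The plan is to reduce both ratios to the single spectral quantity $\lambda':=\max\{|\lambda_2|,|\lambda_n|\}$ and then to trap $\lambda'$ inside a narrow window. First, since $\delta=0$ and the hypothesis gives $\lambda\le 2\sqrt{(d-1)(l-1)}$, the argument of $\psi$ in \eqref{dlarger} is at most $1$, so $\psi$ evaluates to $1$ and hence $\widetilde{\rho}=\frac{1}{\sqrt{(d-1)(l-1)}}$. I would then record the \emph{upper} bound $\lambda'\le (l-2)+2\sqrt{(d-1)(l-1)}$: from $|\lambda_2-(l-2)|\le\lambda$ and $|\lambda_n-(l-2)|\le\lambda$ one gets $\lambda_2\le (l-2)+2\sqrt{(d-1)(l-1)}$ and $|\lambda_n|\le 2\sqrt{(d-1)(l-1)}-(l-2)$, while if $\lambda_2<0$ then $|\lambda_2|\le|\lambda_n|$; so the bound holds in all cases. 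For the \emph{lower} bound I would use the factorization $A(G)=NN^t-dI$: the eigenvalue $\lambda_2+d$ is the square of the second largest singular value of $N$, equivalently the square of the second largest eigenvalue of the $(d,l)$-biregular bipartite graph $G'$, and the biregular version of the Alon--Boppana inequality gives that eigenvalue is at least $\sqrt{d-1}+\sqrt{l-1}-o(1)$, where the $o(1)\to 0$ because $d(l-1)=n^{o(1)}$ forces the diameter of $G$ (hence of $G'$) to tend to infinity with $n$. Squaring and subtracting $d$ then yields $\lambda'\ge\lambda_2\ge (l-2)+2\sqrt{(d-1)(l-1)}(1-o(1))$.

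Granting these two bounds, \eqref{eq:tilderhooverrholower} and \eqref{eq:tilderhooverrhoupp} are formal consequences. Using the known value $\rho=\lambda'/(d(l-1))$ for the simple walk we have $\widetilde{\rho}/\rho=\frac{d(l-1)}{\lambda'\sqrt{(d-1)(l-1)}}$; inserting the upper bound on $\lambda'$ gives \eqref{eq:tilderhooverrholower}, inserting the lower bound gives the first inequality of \eqref{eq:tilderhooverrhoupp}, and then discarding the nonnegative summand $(l-2)\sqrt{(d-1)(l-1)}$ in the denominator and rewriting $1/(1-o(1))$ as $1+o(1)$ gives the second inequality of \eqref{eq:tilderhooverrhoupp}.

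For \eqref{eq:tilderhooverrho'} I would use the formula $\rho'=\psi\big(\lambda'/(2\sqrt{d(l-1)-1})\big)/\sqrt{d(l-1)-1}$ from \cite{ABLS}, recalled in \eqref{eq:1}. Since $\psi\ge 1$, we get $\widetilde{\rho}/\rho'\le\sqrt{d(l-1)-1}/\sqrt{(d-1)(l-1)}=C(d,l)$. For the lower bound, $\rho'$ is nondecreasing in $\lambda'$, hence maximal at $\lambda'=(l-2)+2\sqrt{(d-1)(l-1)}$, which by Claim \ref{Claim} exceeds $2\sqrt{d(l-1)-1}$ so that $\psi$ is evaluated on its nontrivial branch; a direct computation using the identity $\big((l-2)+2\sqrt{(d-1)(l-1)}\big)^2-4\big(d(l-1)-1\big)=(l-2)\big(l-6+4\sqrt{(d-1)(l-1)}\big)$ then simplifies $\widetilde{\rho}$ divided by this maximal value of $\rho'$ to exactly $A(d,l)$, giving $\widetilde{\rho}/\rho'\ge A(d,l)$.

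I expect the genuine obstacle to be the sharp lower bound $\lambda_2\ge (l-2)+2\sqrt{(d-1)(l-1)}-o(1)$. The naive Alon--Boppana bound $\lambda_2\ge 2\sqrt{d(l-1)-1}-o(1)$ for the $d(l-1)$-regular graph $G$ is strictly too weak here (by Claim \ref{Claim}, $2\sqrt{d(l-1)-1}<(l-2)+2\sqrt{(d-1)(l-1)}$ for $l\ge 3$), so one must genuinely exploit the clique structure through $A(G)=NN^t-dI$ and invoke the biregular rather than the regular form of Alon--Boppana; one also has to check, in the regime $d\ge l$ and $d(l-1)=n^{o(1)}$, that the error term after squaring stays $o(\sqrt{(d-1)(l-1)})$, so that it can be absorbed into the $(1-o(1))$ factor appearing in \eqref{eq:tilderhooverrhoupp}. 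Everything else is routine algebra with $\psi$.
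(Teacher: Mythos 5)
Your proposal is correct and follows essentially the same route as the paper: $\widetilde{\rho}=1/\sqrt{(d-1)(l-1)}$ from $\lambda\le 2\sqrt{(d-1)(l-1)}$, the trivial upper bound $\lambda'\le (l-2)+2\sqrt{(d-1)(l-1)}$, and a Feng--Li (biregular Alon--Boppana) lower bound on $\lambda_2$ exploiting that $d(l-1)=n^{o(1)}$ forces the diameter to infinity. The error-term issue you flag at the end is settled by quoting Feng--Li's Theorem~1 in the form the paper uses, where the deficit is already $\frac{2\sqrt{(d-1)(l-1)}-1}{k}=\sqrt{(d-1)(l-1)}\cdot o(1)$ with $k\to\infty$, so no further absorption argument is needed after squaring.
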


\begin{proof}[Proof of Corollary \ref{cor:Comparisonoftilderhorhorho'}]
Define $\lambda':=\max (|\lambda_2|, |\lambda_n|)$, then for $\lambda\le 2\sqrt{(d-1)(l-1)}$, we have
\begin{eqnarray}
&&\widetilde{\rho}=\frac{1}{\sqrt{(d-1)(l-1)}}\label{tilderho}\\
&&\rho=\frac{\lambda'}{d(l-1)}\label{rho}\\
&&\rho'=\psi\Big(\frac{\lambda'}{2\sqrt{d(l-1)-1}}\Big)/\sqrt{d(l-1)-1}\label{rho'}
\end{eqnarray}
We need to use Theorem 1 in \cite{FL}: If the diameter of $G$ is $\ge 2k+2\ge 4$, then 
$$
\lambda_2>l-2+2\sqrt{(d-1)(l-1)}-\frac{2\sqrt{(d-1)(l-1)}-1}{k}
$$
Since the diameter of a $d(l-1)$-regular graph is at least $(1-o(1))\log_{d(l-1)-1}n$ (see the proof of Corollary 1.2 in [1]), we have that if $d(l-1)=n^{o(1)}$, then the diameter will converge to infinity, which is greater than 4 as $n$ large enough. So by this theorem, we have
\begin{eqnarray*}
\lambda'\ge \lambda_2\ge  (l-2)+2\sqrt{(d-1)(l-1)}(1-o(1))
\end{eqnarray*}
Thus substituting $\tilde{\rho}=1/\sqrt{(d-1)(l-1)}$ and $\rho=\lambda'/d(l-1)$ provides (\ref{eq:tilderhooverrhoupp}). 

On the other hand, we have
\begin{equation}\label{eq:lambda'le}
\lambda'\le (l-2)+2\sqrt{(d-1)(l-1)}
\end{equation}
which provides (\ref{eq:tilderhooverrholower}). The result (\ref{eq:tilderhooverrho'}) is by (\ref{eq:lambda'le}) and the fact that $\rho'\ge 1/\sqrt{d(l-1)-1}$. 
\end{proof}

\begin{rk}
A special case is $l=2$, then $\lambda=\lambda'$ and (\ref{eq:tilderhooverrhoupp}), (\ref{eq:tilderhooverrholower}) yield:
$$
\frac{d}{2(d-1)}\le \frac{\widetilde{\rho}}{\rho}\le \frac{d}{2(d-1)}\big(1+o(1)\big)
$$
which means $\widetilde{\rho}/\rho=\frac{d}{2(d-1)}+o(1)$, this is the same as the conclusion of Corollary 1.2 in \cite{ABLS}. And (\ref{eq:tilderhooverrho'}) yields $\widetilde{\rho}/\rho=1$.
\end{rk}

\section{Examples}

The readers less familiar with some of the notions used in this section (partial geometry, point graph, Latin square graphs) may wish to consult \cite{BH,GR,vLW}.

\begin{corollary}\label{cor:PG}
Suppose $G$ is the point graph of a partial geometry $pg(K,R,T)$, then if $R\ge 3$, $K\ge 3$, $R\ge K$, the cliquewise non-backtracking random walk converges slower than the non-backtracking random walk, in other words, $\tilde{\rho}> \rho'$. Moreover, 
$$
\tilde{\rho}=\frac{1}{K-1}
$$
\end{corollary}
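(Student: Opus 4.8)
The plan is to translate the partial-geometry data into the setup of Section~1, read off the strongly regular eigenvalues, apply Corollary~\ref{maincor1}(1) to compute $\tilde\rho$, and then compare it with the non-backtracking mixing rate $\rho'$ using formula~\eqref{eq:1}. First I would fix the dictionary: the lines of $pg(K,R,T)$ form an edge-clique partition $\mathcal{K}$ of the point graph $G$ in which every clique (line) has $K$ points and every point lies on $R$ lines, so in the notation of Section~1 we have $l=K$ and $d=R$, and $G$ is a connected, non-complete, non-bipartite $R(K-1)$-regular graph (non-bipartite because each line is a clique on $K\ge 3$ vertices). The hypotheses $R\ge 3$, $K\ge 3$, $R\ge K$ are precisely $d\ge l$ with $d\ge 3$ and $l\ge 2$, so Corollary~\ref{maincor1}(1) applies to the cliquewise non-backtracking random walk on $G$.

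Next I would bring in the spectrum. The point graph of $pg(K,R,T)$ is strongly regular with eigenvalues $d(l-1)=R(K-1)$, $\theta_1=K-1-T$, and $\theta_2=-R$; since $1\le T\le\min(K,R)=l$ one gets $\theta_1>\theta_2$, so $\lambda_2=K-1-T$ and $\lambda_n=-R=-d$. Using $l-2=K-2$, the quantity entering Corollary~\ref{maincor1}(1) is
\begin{equation*}
\lambda=\max\big(|\lambda_2-(l-2)|,\ |\lambda_n-(l-2)|\big)=\max\big(T-1,\ d+l-2\big)=d+l-2,
\end{equation*}
the last step because $T\le l\le d$. Now $\lambda=(d-1)+(l-1)\ge 2\sqrt{(d-1)(l-1)}$ by AM--GM, so the argument of $\psi$ is $\ge 1$ and $\psi$ has the form $x+\sqrt{x^2-1}$; writing $a=\sqrt{d-1}$, $b=\sqrt{l-1}$ with $a\ge b$, a one-line computation gives $\psi\big(\tfrac{a^2+b^2}{2ab}\big)=\tfrac{a}{b}$, hence $\tilde\rho=\tfrac{1}{ab}\cdot\tfrac{a}{b}=\tfrac{1}{b^2}=\tfrac{1}{l-1}=\tfrac{1}{K-1}$. (When $d=l$ the argument of $\psi$ equals $1$, $\psi=1$, and the value $\tfrac{1}{K-1}$ comes out the same way.) This already gives the displayed formula for $\tilde\rho$.

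It remains to show $\tilde\rho>\rho'$. By \eqref{eq:1}, $\rho'=\psi\!\big(\lambda'/(2\sqrt{d(l-1)-1})\big)/\sqrt{d(l-1)-1}$ with $\lambda'=\max(|\lambda_2|,|\lambda_n|)$; since $|\lambda_2|=|K-1-T|\le K-1=l-1<d=|\lambda_n|$, we have $\lambda'=d$. Here I would split according to whether $y:=d/(2\sqrt{d(l-1)-1})$ is $\le 1$ or $>1$ (equivalently whether $(d+2)^2\le 4dl$). If $y\le 1$, then $\rho'=1/\sqrt{d(l-1)-1}$, and $\rho'<\tilde\rho=1/(l-1)$ reduces to $d>(l-1)+1/(l-1)$, which holds since $d\ge l$ and $l\ge 3$. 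If $y>1$, then $\rho'=\big(d+\sqrt{d^2-4(d(l-1)-1)}\big)/\big(2(d(l-1)-1)\big)$, and $\rho'<1/(l-1)$ reduces, after clearing the positive denominator, to $(l-1)\sqrt{d^2-4d(l-1)+4}<d(l-1)-2$; both sides are positive in this regime (the left because $y>1$, the right because $d(l-1)\ge 9$), so squaring and cancelling leaves the trivial inequality $d(l-1)>1$. In all cases $\rho'<1/(K-1)=\tilde\rho$, as claimed.

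The computations are elementary throughout; the only points requiring care are (i) recording the correct strongly regular eigenvalue triple and the $pg(K,R,T)\leftrightarrow(l,d)$ correspondence, and (ii) in the last step, handling the two regimes of $\psi$ for $\rho'$ and verifying that the squaring is an equivalence (both sides positive). I expect (ii) to be the only place where a sign or boundary issue could slip in, so I would double-check the threshold case $(d+2)^2=4dl$ and the small configurations with $d$ close to $l$.
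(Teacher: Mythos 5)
Your proposal is correct and follows essentially the same route as the paper: identify $l=K$, $d=R$, read off the strongly regular eigenvalues $\lambda_2=K-1-T$ and $\lambda_n=-R$, observe that $\lambda=(d-1)+(l-1)$ so the $\psi$-identity collapses $\tilde\rho$ to $1/(K-1)$, and then compare with $\rho'$ by splitting into the two regimes of $\psi$ and squaring (the paper phrases the second regime as reducing to $\bigl(1-\tfrac{1}{(K-1)^2}\bigr)\bigl(R(K-1)-1\bigr)>0$, which is the same computation as your cancellation). No gaps.
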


\begin{proof}[Proof of Corollary \ref{cor:PG}]
 We have that $G$ is a $d(l-1)$-regular graph as defined in the first section, with $d=R$, $l=K$. We have
\begin{eqnarray}
\lambda_2 &=& K-1-T\\
\lambda_n &=& -R
\end{eqnarray}
Then we have $|\lambda_2-(K-2)|=T-1$, $|\lambda_n-(K-2)|=(R-1)+(K-1)$. Since $T\le \min\{K,R\}$, we have $|\lambda_2-(K-2)|\le |\lambda_n-(K-2)|$. Then by Corollary \ref{maincor1},
\begin{eqnarray}
\tilde{\rho}= \frac{1}{K-1}
\end{eqnarray}
On the other hand, since $|\lambda_n|=R\ge |K-1-T|=|\lambda_2|$, we have
\begin{eqnarray}
\rho' = \frac{1}{\sqrt{R(K-1)-1}}~\mbox{or}~\frac{R+\sqrt{R^2-4\big(R(K-1)-1\big)}}{2\big(R(K-1)-1\big)}
\end{eqnarray}
If $\rho'=\frac{1}{\sqrt{R(K-1)-1}}$, we have
\begin{eqnarray*}
\frac{\tilde{\rho}}{\rho'}&=&\frac{\sqrt{R(K-1)-1}}{K-1}\ge \frac{\sqrt{K(K-1)-1}}{K-1}\\
&\ge &\frac{\sqrt{(K-1)^2+(K-2)}}{K-1}> 1
\end{eqnarray*}
Further if $\rho'=\frac{R+\sqrt{R^2-4\big(R(K-1)-1\big)}}{2\big(R(K-1)-1\big)}$, we have
\begin{eqnarray*}
\frac{\tilde{\rho}}{\rho'}=\frac{2\big(R(K-1)-1\big)}{R(K-1)+(K-1)\sqrt{R^2-4\big(R(K-1)-1\big)}}
\end{eqnarray*}
To see $\tilde{\rho}/{\rho'}> 1$, note that this is equivalent to 
$$
\sqrt{R^2-4\big(R(K-1)-1\big)}< R-\frac{2}{K-1}
$$
which is equivalent to 
$$
\Big(1-\frac{1}{(K-1)^2}\Big)\big(R(K-1)-1\big)> 0
$$
which is clearly true. 
\end{proof}

\begin{rk}
In Corollary \ref{cor:PG}, the reason why we assume $K\ge 3$ ($l\ge 3$) instead of $K\ge 2$ ($l\ge 2$) is that if $K=2$, then $\lambda_n=-R=-d=-d(l-1)$, which means that this is a bipartite graph, which is not the case in Corollary \ref{maincor1}.
\end{rk}

\begin{corollary}\label{cor:PGRLessthanK}
Suppose $G$ is the point graph of a partial geometry $pg(K,R,T)$ with $K>2$, $R\ge 2$ and $R<K$, $T\ge 1$, then we have

1. The mixing rate, $\tilde{\rho}$, of a cliquewise non-backtracking random walk on $G$ satisfies
$$
\tilde{\rho}=\frac{1}{\sqrt{(K-1)(R-1)}}
$$

2. If $\frac{K-3}{4}+\frac{5}{4(K-1)}\le R$, then $\tilde{\rho}> \rho'$, i.e. cliquewise non-backtracking random walk mixes slower.

3. A special kind of partial geometry $pg(K,R,T)$ with $T=1$ is called a generalized quadrangle, denoted by $GQ(K-1,R-1)$. Suppose $G$ is the point graph of $GQ(K-1,R-1)$, then if $R\le K/4-1$, then $\tilde{\rho}< \rho'$, i.e. cliquewise non-backtracking random walk mixes faster.
\end{corollary}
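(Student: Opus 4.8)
The plan is to reduce everything to the closed-form mixing rates that are already available and then compare. As in the proof of Corollary~\ref{cor:PG}, the point graph $G$ of $pg(K,R,T)$ is the $d(l-1)$-regular graph of Corollary~\ref{maincor1} with $d=R$ and $l=K$; it is connected, non-bipartite and non-complete, $1\le T\le\min\{K,R\}=R$, and its eigenvalues other than $\lambda_1=R(K-1)$ are among $\lambda_2=K-1-T$ and $\lambda_n=-R$. Since $R<K$ we are in case~(2) of Corollary~\ref{maincor1}. Part~(1) is then almost immediate: as $\lambda_n=-R=-d$, the value $-R$ is dropped in the definition of $\hat\lambda$, leaving only $K-1-T$, so $\hat\lambda=\abs{(K-1-T)-(K-2)}=T-1$; since $T\le R$ and $R-1\le K-1$ we get $\frac{T-1}{2\sqrt{(K-1)(R-1)}}\le\frac{R-1}{2\sqrt{(K-1)(R-1)}}\le 1$, so $\psi$ evaluates to $1$ and Corollary~\ref{maincor1}(2) gives $\tilde\rho=\frac{1}{\sqrt{(K-1)(R-1)}}$.

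For part~(2), clearing denominators shows that $\frac{K-3}{4}+\frac{5}{4(K-1)}\le R$ is equivalent to $(K-2)^2\le 4(R(K-1)-1)$, i.e.\ $K-2\le 2\sqrt{R(K-1)-1}$. Since $\abs{\lambda_2}=K-1-T\le K-2$ and $\abs{\lambda_n}=R\le 2\sqrt{R(K-1)-1}$ (the latter is $R^2-4R(K-1)+4\le 0$, which holds for $2\le R\le K-1$), the quantity $\lambda':=\max\{\abs{\lambda_2},\abs{\lambda_n}\}$ satisfies $\lambda'\le 2\sqrt{R(K-1)-1}=2\sqrt{d(l-1)-1}$; hence $\psi$ in \eqref{eq:1} equals $1$ and $\rho'=\frac{1}{\sqrt{R(K-1)-1}}$. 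Combining with part~(1),
\[
\frac{\tilde\rho}{\rho'}=\sqrt{\frac{R(K-1)-1}{(R-1)(K-1)}}>1,
\]
because $\big(R(K-1)-1\big)-(R-1)(K-1)=K-2>0$, which is exactly $\tilde\rho>\rho'$.

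For part~(3), the generalized quadrangle $GQ(K-1,R-1)$ is the point graph of $pg(K,R,1)$, so $T=1$ and part~(1) yields $\tilde\rho=\frac{1}{\sqrt{(K-1)(R-1)}}$, while now $\lambda_2=K-2$, $\lambda_n=-R$. The hypothesis $R\le K/4-1$ forces $R<K-2$ (so $\lambda'=K-2$) and, since $4(R(K-1)-1)\le(K-4)(K-1)-4=K^2-5K<(K-2)^2$, also $(K-2)^2>4(R(K-1)-1)$, so $\psi$ in \eqref{eq:1} is on its nontrivial branch and $\rho'=\frac{(K-2)+\sqrt{(K-2)^2-4(R(K-1)-1)}}{2(R(K-1)-1)}$. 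Putting $a=K-2$, $P=R(K-1)-1$, $D=\sqrt{a^2-4P}$, we have $P-a=(K-1)(R-1)>0$, $0<D<a$ and $(a-D)(a+D)=4P$, so $\tilde\rho<\rho'$, i.e.\ $2P<(a+D)\sqrt{P-a}$, is equivalent to $a-D<2\sqrt{P-a}$. Squaring (both sides positive) and using $D^2=a^2-4P$ gives $a^2+2a-4P<aD$; the left side equals $K^2-2K+4-4R(K-1)\ge 3K>0$ under the hypothesis, so squaring once more and simplifying (with $a^2-4a-(a^2-4P)=4(K-1)(R-1)$) reduces $\tilde\rho<\rho'$ to
\[
\big[(K-2)(K-6)-4(K-1)(R-1)\big](K-1)(R-1)>(K-2)^2 .
\]
As a function of $r:=R-1$ the left side is a downward parabola, so on $r\in[1,\,K/4-2]$ its minimum is attained at an endpoint; checking $r=1$ and $r=K/4-2$ turns the inequality into $K^3-14K^2+32K-20>0$ and $(K-1)(K-8)(K+4)>4(K-2)^2$, both of which hold for all $K\ge 12$, and $2\le R\le K/4-1$ forces $K\ge 12$. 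This establishes $\tilde\rho<\rho'$.

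The routine parts are (1) and (2): once the eigenvalues $K-1-T$ and $-R$ are in hand, everything follows from Corollary~\ref{maincor1} and \eqref{eq:1}. The real work is in (3): one must keep careful track of signs so that both squarings above are reversible, and then dispatch the two cubic endpoint inequalities — this last polynomial bookkeeping is where I expect the main difficulty, and where the precise shape of the hypothesis $R\le K/4-1$ is used.
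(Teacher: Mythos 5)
Your proof is correct. Parts (1) and (2) follow the paper's own route essentially verbatim: identify $d=R$, $l=K$, use the three eigenvalues of the strongly regular point graph, compute $\hat\lambda=T-1$, check that the arguments of $\psi$ fall in the trivial branch, and compare the two closed forms. Part (3) is where you genuinely diverge. The paper first shows that $\tilde\rho/\rho'$, viewed through the functions $A(R)$ and $B(R)$, is increasing in $R$, reduces to the single endpoint $R=K/4-1$, bounds $B\bigl(\frac{K-4}{4}\bigr)<1/K$ to settle $K\ge 17$, and then disposes of the six remaining pairs $(R,K)$ with $12\le K\le 16$ by explicit numerical evaluation of $\tilde\rho/\rho'$. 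You instead turn $\tilde\rho<\rho'$ into an exact polynomial inequality via the identity $(a+D)(a-D)=4P$ and two sign-checked squarings, arriving at $\bigl[(K-2)(K-6)-4(K-1)(R-1)\bigr](K-1)(R-1)>(K-2)^2$, and then exploit concavity in $r=R-1$ to reduce to the two endpoint cubics, both of which hold for all $K\ge 12$ (which the hypothesis $2\le R\le K/4-1$ forces). I verified the algebra: the left-hand side $a^2+2a-4P=K^2-2K+4-4R(K-1)\ge 3K>0$ under the hypothesis, so both squarings are reversible, and the two endpoint inequalities $K^3-14K^2+32K-20>0$ and $(K-1)(K-8)(K+4)>4(K-2)^2$ do hold for $K\ge 12$. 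Your approach buys a uniform, purely algebraic argument with no numerical case enumeration, at the cost of slightly heavier bookkeeping; the paper's monotonicity argument is conceptually lighter but leaves a finite residue of cases that must be checked by computation.
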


\begin{proof}[Proof of Corollary \ref{cor:PGRLessthanK}]
1. We have, since $T\le R\le K-1$, 
\begin{eqnarray*}
\lambda_2&=&K-1-T\ge 0\\
\lambda_n&=&-R=-d
\end{eqnarray*}
Hence $|\lambda_2-(K-2)|=T-1\le 2\sqrt{(R-1)(K-1)}$ since $T\le \min\{K,R\}$. Thus by Corollary \ref{maincor1} and the fact that $pg(K,R,T)$ has only three eigenvalues, the mixing rate, $\tilde{\rho}$, of the cliquewise non-backtracking random walk satisfies
\begin{eqnarray}
\tilde{\rho}=\frac{1}{\sqrt{(K-1)(R-1)}}
\end{eqnarray}
which ends the proof of 1. 

2. If $\frac{K-3}{4}+\frac{5}{4(K-1)}\le R$, then we claim that $\max\{\lambda_2,|\lambda_n|\}\le 2\sqrt{R(K-1)-1}$. To see this, we surely have $|\lambda_n|=R\le 2\sqrt{(R-1)(K-1)}\le 2\sqrt{R(K-1)-1}$. For $\lambda_2$, we have $\lambda_2=K-1-T\le K-2$, therefore $K-2\le 2\sqrt{R(K-1)-1}\Rightarrow\lambda_2\le 2\sqrt{R(K-1)-1}$. And the condition 
$$
K-2\le 2\sqrt{R(K-1)-1}\Longleftrightarrow\frac{K-3}{4}+\frac{5}{4(K-1)}\le R
$$
Hence we have
\begin{eqnarray}\label{eq:rho'forKRT}
\rho'=\frac{1}{\sqrt{R(K-1)-1}}
\end{eqnarray}
and 
$$
\frac{\tilde{\rho}}{\rho'}=\frac{\sqrt{R(K-1)-1}}{\sqrt{(K-1)(R-1)}}=\frac{\sqrt{(K-1)(R-1)+(K-2)}}{\sqrt{(K-1)(R-1)}}>1
$$
which ends the proof of 2. 

3. We have $T=1$, hence
\begin{eqnarray*}
\lambda_2&=&K-2\\
\lambda_n&=&-R
\end{eqnarray*}
We firstly consider the case when $R\le (K-3)/4+5/\big(4(K-1)\big)$, we have
\begin{eqnarray}\label{lambda2ge2sqrt}
R\le \frac{K-3}{4}+\frac{5}{4(K-1)}\Longleftrightarrow \lambda_2=K-2\ge  2\sqrt{R(K-1)-1}
\end{eqnarray}
and
\begin{eqnarray}\label{lambdanlelambda2}
R\le \frac{K-3}{4}+\frac{5}{4(K-1)}\Longrightarrow R\le K-2~~(\mbox{i.e.}~|\lambda_n|\le \lambda_2)
\end{eqnarray}
then we have
\begin{eqnarray}\label{rho'forKRTwithless}
\rho'=\frac{K-2+\sqrt{ (K-2)^2-4\big( R(K-1)-1\big)}}{2\big(R(K-1)-1\big)}
\end{eqnarray}
Therefore 
\begin{equation}\label{tilderhooverrho'forKRTwithless}
\frac{\tilde{\rho}}{\rho'}=\frac{2\big(R(K-1)-1\big)}{\sqrt{(K-1)(R-1)}}\cdot \frac{1}{K-2+\sqrt{ (K-2)^2-4\big( R(K-1)-1\big)}}
\end{equation}
It can be shown that $\tilde{\rho}/\rho'$ is increasing with respect to $R$, in fact, from the last equation, define
$$
A(R):=\frac{2\big(R(K-1)-1\big)}{\sqrt{(K-1)(R-1)}}=2\sqrt{(K-1)(R-1)}+\frac{2(K-2)}{\sqrt{(K-1)(R-1)}}
$$
is increasing with respect to $R$ by noting that the function $f(x):=x+(K-2)/x$ is increasing for $x\ge \sqrt{K-2}$ and $\sqrt{(K-1)(R-1)}\ge \sqrt{K-2}$. On the other hand, 
\begin{eqnarray}\label{B(R)}
B(R):=\frac{1}{K-2+\sqrt{ (K-2)^2-4\big( R(K-1)-1\big)}}
\end{eqnarray}
is increasing with respect to $R$. Therefore $\tilde{\rho}/\rho'$ attains its maximum at $R= \frac{K-3}{4}+\frac{5}{4(K-1)}$, where (\ref{eq:rho'forKRT}) still holds. Hence we still have $\tilde{\rho}/\rho'\ge 1$ for $R= \frac{K-3}{4}+\frac{5}{4(K-1)}$ (i.e. upper bound). On the other hand, $\tilde{\rho}/\rho'$ attains its minimum at $R=2$, thus the lower bound:
\begin{eqnarray*}
&&\frac{\tilde{\rho}}{\rho'}=\frac{4K-6}{\sqrt{K-1}}\cdot \frac{1}{K-2+\sqrt{K^2-12K+16}}\\
&\le &\frac{4(K-1)}{\sqrt{K-1}}\cdot \frac{1}{K-1}=\sqrt{\frac{16}{K-1}}
\end{eqnarray*}
for $K\in \mathbb{N}$ such that $K^2-12K+16\ge 1$. Thus if $K\ge 18$, the lower bound is $<1$.

Further if $R\le K/4-1$, then we clearly have $R\le (K-3)/4+5/\big(4(K-1)\big)$, hence (\ref{lambda2ge2sqrt}), (\ref{lambdanlelambda2}), (\ref{rho'forKRTwithless}) and (\ref{tilderhooverrho'forKRTwithless}) hold. Since (\ref{tilderhooverrho'forKRTwithless}) is increasing with respect to $R$, so we only need to verify that (\ref{tilderhooverrho'forKRTwithless})$<1$ for $R=K/4-1$. We have, at $R=K/4-1$, 
\begin{eqnarray*}
(\ref{tilderhooverrho'forKRTwithless})=\frac{K^2-5K}{\sqrt{K^2-9K+8}}B\Big(\frac{K-4}{4}\Big)
\end{eqnarray*} 
where $B(R)$ is defined by (\ref{B(R)}). Since $B(R)$ is increasing with respect to $R$, therefore we have
\begin{eqnarray*}
B\Big(\frac{K-4}{4}\Big)<B\Big(\frac{K-3}{4}\Big)= \frac{1}{K-2+\sqrt{5}}<\frac{1}{K}
\end{eqnarray*}
Therefore we have
\begin{eqnarray}\label{73less}
(\ref{tilderhooverrho'forKRTwithless})<\frac{K^2-5K}{\sqrt{K^2-9K+8}}\cdot\frac{1}{K}
\end{eqnarray} 
We claim that (\ref{73less})$\le 1$ for $K\ge 17$ and hence (\ref{tilderhooverrho'forKRTwithless})$<1$ if $K\ge  17$, in fact, it suffices to prove that for $K\ge 17$,
$$
(K^2-5K)^2\le K^4-9K^3+8K^2
$$
which is obviously true. On the other hand, if $K<17$, then by our assumption $2\le R\le K/4-1\Rightarrow K\ge 12$. Thus we have six possibilities, they are: 
$$
(R,K)=\{(2,12),~(2,13),~(2,14),~(2,15),~(2,16),~(3,16)\}
$$
The corresponding $\tilde{\rho}/\rho'$ is:
$$
\frac{\tilde{\rho}}{\rho'}\approx\{0.904534,~0.810432,~0.744234,~0.69351,
~0.652692,~0.869771\}
$$
They are all less than 1, which ends the proof of Corollary \ref{cor:PGRLessthanK}. 
\end{proof}

\begin{corollary}\label{cor:GQqqpower}
Consider three special kinds of generalized quadrangle $GQ(q,1)$, $GQ(q,q)$, $GQ(q,q^2)$ where $q$ is a prime power, we have:

1. For $GQ(q,1)$, $\tilde{\rho}< \rho'$ for $q\ge 11$, and $\tilde{\rho}>  \rho'$ for $q\le 10$. 

2. For $GQ(q,q)$ and $GQ(q,q^2)$, then we always have $\tilde{\rho}>  \rho'$. 
\end{corollary}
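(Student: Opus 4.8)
The plan is to dispatch the three families one at a time: read off the strongly regular parameters of the point graph, pin down $\tilde{\rho}$ via Corollaries \ref{cor:PG} and \ref{cor:PGRLessthanK}, pin down $\rho'$ via the Alon--Benjamini--Lubetzky--Sodin formula \eqref{eq:1}, and then compare. Recall that the generalized quadrangle $GQ(s,t)$ is the partial geometry $pg(s+1,t+1,1)$, so its point graph $G$ carries exactly the clique partition considered in this paper, with clique size $l=s+1$, with $d=t+1$ cliques through each vertex, valency $d(l-1)=s(t+1)$, and non-principal eigenvalues $\lambda_2=l-2$ and $\lambda_n=-d$ (in partial-geometry notation $K=l$, $R=d$, $T=1$, so indeed $\lambda_2=K-1-T=l-2$). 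Since $\lambda_2-(l-2)=0$, the two mixing rates are governed entirely by $\lambda_n=-d$. Specializing: $GQ(q,1)$ has $l=q+1$, $d=2$, $\lambda_n=-2$, and since $d<l$ part~(1) of Corollary \ref{cor:PGRLessthanK} gives $\tilde{\rho}=1/\sqrt{(d-1)(l-1)}=1/\sqrt{q}$; $GQ(q,q)$ has $l=d=q+1$, $\lambda_n=-(q+1)$, and Corollary \ref{cor:PG} gives $\tilde{\rho}=1/(l-1)=1/q$; $GQ(q,q^2)$ has $l=q+1$, $d=q^2+1$, $\lambda_n=-(q^2+1)$, and since $d>l$ Corollary \ref{cor:PG} again gives $\tilde{\rho}=1/(l-1)=1/q$.

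Next I would evaluate $\rho'=\psi\bigl(\lambda'/(2\sqrt{d(l-1)-1})\bigr)/\sqrt{d(l-1)-1}$ from \eqref{eq:1}, where $\lambda'=\max(|\lambda_2|,|\lambda_n|)$ equals $\max(q-1,2)$, $q+1$, and $q^2+1$ in the three cases, while $d(l-1)-1$ equals $2q-1$, $q^2+q-1$, and $q^3+q-1$. The crux is which piece of $\psi$ is active: whether $\lambda'\le 2\sqrt{d(l-1)-1}$, in which case $\psi\equiv 1$ and $\rho'=(d(l-1)-1)^{-1/2}$. A short computation shows $\lambda'\le 2\sqrt{d(l-1)-1}$ holds for $GQ(q,q)$ for every $q$ (equivalent to $(3q+5)(q-1)\ge 0$), for $GQ(q,q^2)$ exactly when $q\le 3$ (equivalent to $q^4-4q^3+2q^2-4q+5\le 0$), and for $GQ(q,1)$ exactly when $q\le 9$ (equivalent to $q^2-10q+5\le 0$). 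In each of those cases $\tilde{\rho}>\rho'$ becomes an elementary inequality: $\sqrt{q^2+q-1}>q$ for $GQ(q,q)$, $\sqrt{q^3+q-1}>q$ for $GQ(q,q^2)$ with $q\le 3$, and $\sqrt{2q-1}>\sqrt{q}$ for $GQ(q,1)$ with $q\le 9$, all obvious. Since $10$ is not a prime power this already settles $GQ(q,1)$ for all prime powers $q\le 10$ (and the non-prime-power grid $q=10$ is a one-line numerical check), together with $GQ(q,q)$ for every $q$ and $GQ(q,q^2)$ for $q\le 3$.

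Two cases remain. For $GQ(q,q^2)$ with $q\ge 4$ one has $\lambda'>2\sqrt{d(l-1)-1}$, so $\rho'=\dfrac{(q^2+1)+\sqrt{(q^2+1)^2-4(q^3+q-1)}}{2(q^3+q-1)}$; substituting $\tilde{\rho}=1/q$, isolating the square root (the other side being positive for $q\ge 2$) and squaring turns $\tilde{\rho}>\rho'$ into $q^5-q^2-q+1>0$, which factors as $(q-1)(q^4+q^3+q^2-1)>0$ and is plainly true. For $GQ(q,1)$ with $q\ge 11$ one has, in partial-geometry notation, $K=q+1$, $R=2$, $T=1$, hence $R\le K/4-1$; part~(3) of Corollary \ref{cor:PGRLessthanK} then applies verbatim and yields $\tilde{\rho}<\rho'$.

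The real obstacle is not a single hard estimate but the bookkeeping in the middle step: for two of the three families the active piece of $\psi$ (equivalently, whether $|\lambda_n|$ exceeds $2\sqrt{d(l-1)-1}$) depends on $q$, so one must first locate the thresholds $q\le 3$ for $GQ(q,q^2)$ and $q\le 9$ for $GQ(q,1)$ before the comparisons become mechanical. Past that, the only genuinely new computation is the factorization $q^5-q^2-q+1=(q-1)(q^4+q^3+q^2-1)$; everything else is either immediate or is delegated to Corollaries \ref{cor:PG} and \ref{cor:PGRLessthanK}.
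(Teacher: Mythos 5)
Your proof is correct and follows essentially the same route as the paper: identify the partial-geometry parameters of each point graph, read $\tilde{\rho}$ off Corollaries \ref{cor:PG} and \ref{cor:PGRLessthanK}, and compare with the value of $\rho'$ from \eqref{eq:1}, using part 3 of Corollary \ref{cor:PGRLessthanK} for $GQ(q,1)$ with $q\ge 11$ exactly as the paper does. The only difference is organizational: the paper disposes of $GQ(q,q)$ and $GQ(q,q^2)$ in one line by citing the conclusion of Corollary \ref{cor:PG} (whose hypotheses $R\ge K\ge 3$ these families satisfy) and handles $q\le 10$ in $GQ(q,1)$ via part 2 of Corollary \ref{cor:PGRLessthanK} plus a numerical check at $K=11$, whereas you re-derive those comparisons explicitly --- your threshold analysis for which branch of $\psi$ is active and the factorization $q^5-q^2-q+1=(q-1)(q^4+q^3+q^2-1)$ are correct but simply specialize computations already carried out in the proofs of those corollaries.
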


\begin{proof}[Proof of Corollary \ref{cor:GQqqpower}]
1. For $GQ(q,1)$, we have $K=q+1$, $R=2$, then by the third part of Corollary \ref{cor:PGRLessthanK}, we have $\tilde{\rho}< \rho'$ for $K\ge 12$, therefore the same result holds for $q\ge 11$. If $q\le 10\Rightarrow K\le 11$, further if $K=11$, then by (\ref{tilderhooverrho'forKRTwithless}), we have $\tilde{\rho}/\rho'\approx 1.06947>1$. If else $K\le 10\Rightarrow \frac{K-3}{4}+\frac{5}{4(K-1)}\le 2=R\le K-1$, then by the second part of Corollary \ref{cor:PGRLessthanK}, we have $\tilde{\rho}>\rho'$. 

2. For $GQ(q,q)$ or $GQ(q,q^2)$, we have either $K=q+1$ and $R=q+1$, or else $K=q+1$ and $R=q^2+1$, both of which satisfy  $R\ge K$ and $R\ge 3$ and $K\ge 3$, then by Corollary \ref{cor:PG}, we have $\tilde{\rho}>\rho'$.
\end{proof}

\begin{corollary}\label{cor:Latinsquare}
Consider the partial geometry induced by a Latin square of size $l\times l$ with $l> 3$, i.e. two positions in the Latin square are said to be contained in one line if they are either in the same row or same column in the Latin square, or else they share the same value. So the corresponding partial geometry is $pg(K,R,T)$ with $K=l$, $R=3$, $T=2$. Therefore the point graph $G$ of this partial geometry is a $3(l-1)$-regular graph as defined in the first section. We have that $\tilde{\rho}>\rho'$ for $l\le 16$ (i.e. the cliquewise non-backtracking random walk mixes slower);  $\tilde{\rho}<\rho'$ for $l\ge 17$ (i.e. the cliquewise non-backtracking random walk mixes faster). Moreover, as $l\rightarrow\infty$, then $\tilde{\rho}/\rho'\sim\frac{3}{\sqrt{2l}}$. 
\end{corollary}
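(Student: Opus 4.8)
The plan is to read off the eigenvalues of $G$, evaluate the closed-form expressions for $\tilde{\rho}$ and $\rho'$ from Corollary~\ref{maincor1} and \eqref{eq:1}, and then compare. As the point graph of $pg(l,3,2)$, $G$ is a connected, non-complete, non-bipartite $3(l-1)$-regular strongly regular graph with three eigenvalues $\lambda_1 = 3(l-1)$, $\lambda_2 = K-1-T = l-3$ and $\lambda_n = -R = -3$; in the notation of Section~1 this means $d = R = 3$ and clique size $K = l$, and since $l > 3$ we are in case (2) of Corollary~\ref{maincor1} (equivalently, Corollary~\ref{cor:PGRLessthanK}(1) applies). Because $\lambda_n = -3 = -d$, this eigenvalue is excluded from $\hat{\lambda}$, leaving $\hat{\lambda} = |\lambda_2 - (l-2)| = |(l-3)-(l-2)| = 1$. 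Since $1 < 2\sqrt{2(l-1)}$ for every $l \ge 2$, we get $\psi\bigl(1/(2\sqrt{2(l-1)})\bigr) = 1$, and \eqref{dsmaller} gives $\tilde{\rho} = 1/\sqrt{(d-1)(l-1)} = 1/\sqrt{2(l-1)}$ for all $l > 3$.

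Next I would compute $\rho'$ from $\rho' = \psi\bigl(\lambda'/(2\sqrt{d(l-1)-1})\bigr)/\sqrt{d(l-1)-1}$ (see \eqref{eq:1}), with $d(l-1)-1 = 3l-4$ and $\lambda' = \max(|\lambda_2|,|\lambda_n|) = \max(l-3,3)$. A short check shows $\lambda' < 2\sqrt{3l-4}$ precisely for $4 \le l \le 16$ (for $l \ge 6$ this is the inequality $l^2 - 18l + 25 < 0$; for $l = 4,5$ it is immediate), so there $\psi$ evaluates to $1$ and $\rho' = 1/\sqrt{3l-4}$; consequently $\tilde{\rho}/\rho' = \sqrt{(3l-4)/(2l-2)} > 1$ because $l > 2$, i.e.\ $\tilde{\rho} > \rho'$ on this range (for $l \le 14$ this is also immediate from Corollary~\ref{cor:PGRLessthanK}(2), and $l = 15,16$ are then direct checks). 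For $l \ge 17$ we instead have $\lambda' = l-3$ with $(l-3)^2 > 4(3l-4)$, so $\rho' = \bigl((l-3)+\sqrt{(l-3)^2-4(3l-4)}\bigr)/\bigl(2(3l-4)\bigr)$.

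It remains to show $\tilde{\rho} < \rho'$ for $l \ge 17$. The plan is to rationalize: the two numbers $(l-3)\pm\sqrt{(l-3)^2-4(3l-4)}$ are both positive with product $4(3l-4)$, so cross-multiplying $\tilde{\rho} < \rho'$ and dividing through by $2(3l-4)$ turns the claim into $(l-3)-\sqrt{(l-3)^2-4(3l-4)} < 2\sqrt{2(l-1)}$. Since $(l-3)^2 > 8(l-1)$ for $l \ge 13$, the left-hand side is positive here, so squaring is reversible; cancelling $(l-3)^2$ and simplifying gives $5l-6 < (l-3)\sqrt{2(l-1)}$, and squaring once more (both sides positive for $l \ge 4$) reduces the whole thing to the cubic inequality $p(l) := 2l^3 - 39l^2 + 90l - 54 > 0$. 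One checks $p(17) = 31 > 0$ and $p'(l) = 6(l^2-13l+15) > 0$ for $l \ge 12$, so $p$ is increasing on $[12,\infty)$ and hence $p(l) > 0$ for all $l \ge 17$, giving $\tilde{\rho} < \rho'$; that $p(16) = -406 < 0$ matches the crossover being exactly at $l = 17$.

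For the asymptotics, $\tilde{\rho} = (2(l-1))^{-1/2} \sim (2l)^{-1/2}$, whereas for $l \ge 17$ one has $\rho' = \tfrac{l-3}{2(3l-4)}\bigl(1+\sqrt{1-\tfrac{4(3l-4)}{(l-3)^2}}\bigr) \to \tfrac13$ as $l \to \infty$; hence $\tilde{\rho}/\rho' \sim 3/\sqrt{2l}$. I expect the main obstacle to be precisely the $l \ge 17$ comparison: the double squaring must be carried out while tracking signs (this is exactly where the auxiliary estimate $(l-3)^2 > 8(l-1)$, valid for $l \ge 13$, is needed to make each squaring an equivalence), and since the resulting cubic is positive but small at the endpoint ($p(17) = 31$), no lossy bound will do — the conclusion genuinely rests on the exact value $p(17) = 31$ together with monotonicity of $p$. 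The remaining pieces (reading off the spectrum, evaluating $\psi$, and the two borderline values $l = 15,16$) are routine.
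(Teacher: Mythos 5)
Your proposal is correct, and the setup (reading off $\lambda_2=l-3$, $\lambda_n=-3=-d$, getting $\tilde\rho=1/\sqrt{2l-2}$, and splitting $\rho'$ at $l^2-18l+25=0$, i.e.\ at $l=9+2\sqrt{14}$) coincides with the paper's. Where you genuinely diverge is the comparison for $l\ge 17$: the paper bounds $\tilde\rho/\rho'$ from above by a lossy function $f(l)=3\sqrt{2}\sqrt{l-4/3}/(l-3)$, which only certifies $\tilde\rho<\rho'$ for $l\ge 23$, and then disposes of $l=17,\dots,22$ by six numerical evaluations of \eqref{eq:tilderhooverrho'Latinsq}. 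You instead rationalize exactly — using that the conjugate roots have product $4(3l-4)$, the claim becomes $(l-3)-\sqrt{(l-3)^2-4(3l-4)}<2\sqrt{2(l-1)}$ — and, after two sign-checked squarings (the estimate $(l-3)^2>8(l-1)$ for $l\ge 13$ making the first one reversible), reduce everything to $p(l)=2l^3-39l^2+90l-54>0$, settled by $p(17)=31$ and $p'(l)>0$ for $l\ge 12$. I verified the algebra: the intermediate inequality $5l-6<(l-3)\sqrt{2(l-1)}$ and the values $p(17)=31$, $p(16)=-406$ are all correct. Your route buys a uniform, computation-free proof over the whole range $l\ge 17$ and even exhibits the crossover at $l=17$ as the sign change of $p$; the price is that it is tailored to these specific parameters, whereas the paper's crude bound $f(l)$ is the kind of estimate that generalizes (and indeed reappears in the $t$ orthogonal Latin squares discussion). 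The asymptotics $\rho'\to 1/3$ and $\tilde\rho/\rho'\sim 3/\sqrt{2l}$ agree with the paper.
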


\begin{proof}[Proof of Corollary \ref{cor:Latinsquare}]
We have 
\begin{eqnarray}
\lambda_2 &=& l-3\\
\lambda_n &=& -3=-d
\end{eqnarray}
Since we have $d<l$ and $\lambda_n=-d$, then by Corollary \ref{maincor1}, we have 
$$
\tilde{\rho}=\frac{1}{\sqrt{2(l-1)}}\psi\Big(\frac{|\lambda_2-(l-2)|}{2\sqrt{2(l-1)}}\Big)
$$
and $|\lambda_2-(l-2)|=1<2\sqrt{2(l-1)}$, therefore 
\begin{eqnarray}
\tilde{\rho}=\frac{1}{\sqrt{2l-2}}
\end{eqnarray}
For $\rho'$, we clearly have 
$$
|\lambda_n|=3\le 2\sqrt{5}\le 2\sqrt{3(l-1)-1}=2\sqrt{d(l-1)-1}
$$
Then if $l\le 9+2\sqrt{14}\Leftrightarrow \lambda_2=l-3\le 2\sqrt{3(l-1)-1}=2\sqrt{d(l-1)-1}$, we have
\begin{eqnarray}
\rho'=\frac{1}{\sqrt{3l-4}}
\end{eqnarray}
Therefore $\tilde{\rho}>\rho'$. Further if $l>9+2\sqrt{14}$, we have
\begin{eqnarray}
\rho'=\frac{l-3+\sqrt{l^2-18l+25}}{6l-8}
\end{eqnarray}
therefore
\begin{eqnarray}\label{eq:tilderhooverrho'Latinsq}
\frac{\tilde{\rho}}{\rho'}=\frac{6l-8}{\sqrt{2l-2}}\cdot \frac{1}{l-3+\sqrt{l^2-18l+25}}
\end{eqnarray}
We have the following inequality for $\tilde{\rho}/\rho'$:
\begin{eqnarray*}
\frac{\tilde{\rho}}{\rho'}&=&\frac{6l-8}{\sqrt{2l-2}}\cdot \frac{1}{l-3+\sqrt{l^2-18l+25}}\\
&\le & \frac{6l-8}{\sqrt{2l-2}}\cdot \frac{1}{l-3}\le\frac{6l-8}{l-3}\cdot \frac{1} {\sqrt{2l-8/3}}\\
&=:&f(l)
\end{eqnarray*}
We claim that $f(l)<1$ for $l\ge 23$, therefore $\tilde{\rho}/\rho'<1$  for $l\ge 23$. In fact, we have
\begin{eqnarray*}
f(l)&=&\frac{6l-8}{\sqrt{2l-8/3}}\cdot \frac{1}{l-3}=\frac{3\sqrt{2}\sqrt{l-4/3}}{l-3} 
\end{eqnarray*}
We have
$$
f(l)<1\Longleftrightarrow 18(l-4/3)<(l-3)^2\Longleftarrow l>12+\sqrt{111} \Longleftarrow l\ge 23
$$
On the other hand, if $9+2\sqrt{14}<l<23$, we have six choices: $l=17$, 18, 19, 20, 21, 22. Then by (\ref{eq:tilderhooverrho'Latinsq}), we have $\tilde{\rho}/\rho'\approx 0.987437$, 0.857493, 0.780563, 0.724947, 0.681405, 0.645748 respectively. Moreover, as $l\rightarrow\infty$, $\tilde{\rho}/\rho'\sim\frac{3}{\sqrt{2l}}$ by (\ref{eq:tilderhooverrho'Latinsq}), which ends the proof of Corollary \ref{cor:Latinsquare}.
\end{proof}

Suppose that we have $t$ orthogonal $l\times l$ Latin squares with $t<l-2$ and $l>3$. Consider the partial geometry induced by these Latin squares and the point graph of this partial geometry. Then we have $d=t+2<l$, $\lambda_2=l-t-2>0$ and $\lambda_n=-t-2$. Then by Corollary 1.3 and by the fact that $2\sqrt{(t+2)(l-1)}>2(t+1)>t=|\lambda_2-(l-2)|$, the mixing rate of the cliquewise non-backtracking random walk equals
$$
\tilde{\rho}=\frac{1}{\sqrt{(t+1)(l-1)}}.
$$
The mixing rate $\rho'$ of the non-backtracking random walk will be 
\begin{equation}\label{mixingrateoforthogLatinsq}
\rho'=\begin{cases}\frac{l-(t+2)+\sqrt{l^2-6lt-12l+(t+4)^2}}{2\left((t+2)(l-1)-1\right)}&\mbox{if}~l\ge 3(t+2)+2\sqrt{(2t+5)(t+1)}\\
\frac{1}{\sqrt{(t+2)(l-1)-1}} &\mbox{otherwise.}
\end{cases}
\end{equation}
To see this, firstly note that if $\lambda_2\ge |\lambda_n|$, then the fact $l\ge 3(t+2)+2\sqrt{(2t+5)(t+1)}$ is equivalent to $\lambda_2\ge 2\sqrt{d(l-1)-1}$. On the other hand, if $\lambda_2\le |\lambda_n|$, then we have $|\lambda_n|=t+2\le 2\sqrt{(t+2)(t+2)-1}\le 2\sqrt{d(l-1)-1}$. Therefore we have if $t+2<l\le 3(t+2)+2\sqrt{(2t+5)(t+1)}$, 
\begin{eqnarray*}
\tilde{\rho}=\frac{1}{\sqrt{(t+1)(l-1)}}\ge \frac{1}{\sqrt{(t+1)(l-1)+(l-1)-1}}=\rho'
\end{eqnarray*}
The case that $l\ge 3(t+2)+2\sqrt{(2t+5)(t+1)}$ will be a bit more complicated. But we have that 
\begin{eqnarray*}
\frac{\tilde{\rho}}{\rho'}&\le& \frac{2(t+2)(l-1)-1}{\sqrt{(t+1)(l-1)}}\cdot \frac{1}{l-(t+2)}\le \frac{2(t+2)}{\sqrt{t+1}}\cdot\frac{\sqrt{l-1}}{l-(t-2)}\\
&\le &\frac{2(t+2)}{\sqrt{t+1}}\cdot\frac{\sqrt{l}}{l/2}~~(\mbox{by~the~fact~that~}l\ge 2(t+2)\Rightarrow l-(t+2)\ge l/2)\\
&=&\frac{4(t+2)}{\sqrt{t+1}}\cdot\frac{1}{\sqrt{l}}
\end{eqnarray*}
So a sufficient condition for $\tilde{\rho}<\rho'$ is that $l\ge 16(t+2)^2/(t+1)$. Moreover, we have, by (\ref{mixingrateoforthogLatinsq}), 
$$
\frac{\tilde{\rho}}{\rho'}\sim \frac{t+2}{\sqrt{t+1}\sqrt{l}}
$$
Moreover if $t=1$, we obtain Corollary \ref{cor:Latinsquare}.

\section*{Appendix}

Before proving Lemma \ref{mainlemma}, we firstly state a well known result:

\begin{lemma} \label{lem:Harmonic}
Define $\mathbb{T}$ as the unit circle in $\mathbb{C}$. If $\theta/\pi\notin\mathbb{Q}$, then the range of $e^{ik\theta}:\mathbb{Z}\rightarrow\mathbb{T}$ is dense on $\mathbb{T}$. 
\end{lemma}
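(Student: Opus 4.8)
The plan is to show the apparently stronger fact that the orbit $\{e^{ik\theta}:k\in\mathbb{Z}\}$ is $\varepsilon$-dense in $\mathbb{T}$ for every $\varepsilon>0$, which immediately gives density. The hypothesis is used only through the equivalence: $\theta/\pi\notin\mathbb{Q}$ if and only if $e^{i\theta}$ is not a root of unity. Indeed, $e^{in\theta}=1$ for some integer $n\ge 1$ means $n\theta\in 2\pi\mathbb{Z}$, hence $\theta/\pi\in\mathbb{Q}$, and the converse is just as easy; consequently $e^{im\theta}\ne 1$ for every nonzero integer $m$.

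Fix $\varepsilon>0$ and choose a positive integer $N$ with $2\pi/N<\varepsilon$. Partition $\mathbb{T}$ into $N$ consecutive half-open arcs of length $2\pi/N$. By Dirichlet's pigeonhole principle two of the $N+1$ points $1,e^{i\theta},e^{2i\theta},\dots,e^{iN\theta}$ lie in a common arc, say $e^{ij\theta}$ and $e^{ik\theta}$ with $0\le j<k\le N$; then $m:=k-j$ is a nonzero integer and $e^{im\theta}$ lies at distance less than $2\pi/N$ from $1$ on $\mathbb{T}$. By the observation above this distance is some $\varepsilon_0$ with $0<\varepsilon_0<2\pi/N<\varepsilon$.

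To finish, consider the sub-orbit $1,e^{im\theta},e^{2im\theta},\dots$, which is contained in $\{e^{ik\theta}:k\in\mathbb{Z}\}$. Passing from one term to the next is rotation of $\mathbb{T}$ by the fixed angle $\varepsilon_0$ in one fixed direction (whichever way the short arc from $1$ to $e^{im\theta}$ points), and since $\varepsilon_0<2\pi$ these equally spaced points step all the way around the circle without ever skipping an arc of length $\varepsilon_0$; hence the sub-orbit, and a fortiori the whole orbit, is $\varepsilon_0$-dense, so $\varepsilon$-dense. As $\varepsilon$ was arbitrary, the orbit is dense in $\mathbb{T}$. There is no genuine obstacle here; the only spot meriting a sentence of care is the last claim that the $\varepsilon_0$-spaced sub-orbit covers the whole circle to within $\varepsilon_0$, which follows at once from $\varepsilon_0>0$. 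If one prefers, there is an even shorter route: the closure of $\{e^{ik\theta}:k\in\mathbb{Z}\}$ is a closed subgroup of the compact group $\mathbb{T}$, and every closed subgroup of $\mathbb{T}$ is either a finite cyclic group generated by a root of unity or all of $\mathbb{T}$; the first is impossible by the observation. I would include the pigeonhole proof to keep the appendix self-contained.
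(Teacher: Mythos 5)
Your proof is correct. Note that the paper itself offers no proof of this lemma at all: it is stated in the appendix as ``a well known result'' and used as a black box, so there is nothing to compare your argument against line by line. What you have written is the classical Kronecker/Dirichlet pigeonhole proof of density of an irrational rotation orbit, and every step checks out: the translation of the hypothesis $\theta/\pi\notin\mathbb{Q}$ into ``$e^{im\theta}\neq 1$ for all $m\neq 0$'' is exactly what makes $\varepsilon_0>0$, the pigeonhole step among the $N+1$ points $1,e^{i\theta},\dots,e^{iN\theta}$ produces a nontrivial power of $e^{i\theta}$ within arc-distance $2\pi/N<\varepsilon$ of $1$, and iterating that rotation sweeps the circle with gaps at most $\varepsilon_0$. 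The alternative you sketch via closed subgroups of $\mathbb{T}$ is also valid. Since the authors chose not to prove the lemma, your self-contained pigeonhole argument is a reasonable thing to include if one wants the appendix to be complete, though a citation would serve equally well.
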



\begin{proof}[Proof of Lemma \ref{mainlemma}]
If $y\in [-1,1]$, then $y=\cos\theta$ for some $\theta\in[0,\pi]$. We have
\begin{eqnarray*}
|q_k(y)|&=&\Big|\sqrt{(l-1)(1-\delta)(d-1+\delta)}U_k(y)+(l-2)(1-\delta)U_{k-1}(y)\\
&&-\frac{(1-\delta)\sqrt{(l-1)(1-\delta)}}{\sqrt{d-1+\delta}}U_{k-2}(y)\Big|\\
&=& \Big|\sqrt{(l-1)(1-\delta)(d-1+\delta)}\frac{\sin(k+1)\theta}{\sin\theta}+(l-2)(1-\delta)\frac{\sin k\theta}{\sin\theta}\\
&&-\frac{(1-\delta)\sqrt{(l-1)(1-\delta)}}{\sqrt{d-1+\delta}}\frac{\sin(k-1)\theta}{\sin\theta}\Big|\\
&\le & \sqrt{(l-1)(1-\delta)(d-1+\delta)}(k+1)+(l-2)(1-\delta)k\\
&&+\frac{(1-\delta)\sqrt{(l-1)(1-\delta)}}{\sqrt{d-1+\delta}}(k-1)
\end{eqnarray*}
Therefore $\limsup_{k\rightarrow\infty}|q_k(y)|^{1/k}\le 1$. 

Next, if $\theta/\pi\in [0,1/2]\setminus\mathbb{Q}$, take a sequence of integers $k_j$ such that $\lim_{j\rightarrow\infty}\sin(k_j+1)\theta=1$ by Lemma \ref{lem:Harmonic}. So we have $\lim_{j\rightarrow\infty}\sin k_j\theta=\cos\theta> 0$ by the fact that $e^{i(k_j+1)\theta}\rightarrow i$, which yields
\begin{eqnarray*}
&&\lim_{j\rightarrow\infty}q_{k_j}(y)\\
&=&\lim_{j\rightarrow\infty}\Big[\sqrt{(l-1)(1-\delta)(d-1+\delta)}\frac{\sin(k_j+1)\theta}{\sin\theta}+(l-2)(1-\delta)\frac{\sin k_j\theta}{\sin\theta}\\
&&-\frac{(1-\delta)\sqrt{(l-1)(1-\delta)}}{\sqrt{d-1+\delta}}\frac{\sin(k_j-1)\theta}{\sin\theta}\Big]\\
&\ge &\frac{1}{\sin\theta}\Big(\sqrt{(l-1)(1-\delta)(d-1+\delta)}-\frac{(1-\delta)\sqrt{(l-1)(1-\delta)}}{\sqrt{d-1+\delta}}\Big)\\
&\ge & \frac{1}{\sin\theta}\Big(\sqrt{(l-1)(1-\delta)(d-1+\delta)}-\frac{\sqrt{(l-1)(1-\delta)}}{\sqrt{d-1+\delta}}\Big)\\
&=&\frac{1}{\sin\theta}\frac{\sqrt{(l-1)(1-\delta)}}{\sqrt{d-1+\delta}}(d+\delta-2)>0
\end{eqnarray*}
for $d>2$. On the other hand, if $d=2$ and $\delta=0$, we have $l(1-\delta)>d$ by the assumption of the lemma, therefore  $l>d=2$, which yields
\begin{eqnarray*}
&&\lim_{j\rightarrow\infty}q_{k_j}(y)\\
&=&\lim_{j\rightarrow\infty}\Big[\sqrt{(l-1)(1-\delta)(d-1+\delta)}\frac{\sin(k_j+1)\theta}{\sin\theta}+(l-2)(1-\delta)\frac{\sin k_j\theta}{\sin\theta}\\
&&-\frac{(1-\delta)\sqrt{(l-1)(1-\delta)}}{\sqrt{d-1+\delta}}\frac{\sin(k_j-1)\theta}{\sin\theta}\Big]\\
&\ge  &\frac{1}{\sin\theta}\Big(\sqrt{(l-1)(1-\delta)(d-1+\delta)}-\frac{(1-\delta)\sqrt{(l-1)(1-\delta)}}{\sqrt{d-1+\delta}}\Big)\\
&&+(l-2)(1-\delta)\frac{\cos \theta}{\sin\theta}\\
&= &(l-2)\frac{\cos \theta}{\sin\theta}>0
\end{eqnarray*}
Thus $\limsup_{k\rightarrow\infty}|q_k(y)|^{1/k}=1$ for $\theta/\pi\in[0,1/2]\setminus\mathbb{Q}$. 

If $\theta/\pi\in[1/2,3/4]\setminus\mathbb{Q}$, further if $\sqrt{(l-1)(1-\delta)(d-1+\delta)}>(l-2)(1-\delta)/\sqrt{2}$, then take a sequence of integers $k_j$ such that $\lim_{j\rightarrow\infty}\sin(k_j+1)\theta=1$. Thus
\begin{eqnarray*}
&&\lim_{j\rightarrow\infty}\sin k_j\theta\ge -1/\sqrt{2}\\
&&\lim_{j\rightarrow\infty}\sin(k_j-1)\theta\le 0
\end{eqnarray*}
We have
$$
\lim_{j\rightarrow\infty}q_{k_j}(y)\ge \frac{1}{\sin\theta}(\sqrt{(l-1)(1-\delta)(d-1+\delta)}-(l-2)(1-\delta)/\sqrt{2})>0
$$
Thus $\lim_{j\rightarrow\infty}|q_{k_j}(y)|^{1/k_j}$=1. Further if $\sqrt{(l-1)(1-\delta)(d-1+\delta)}\le (l-2)(1-\delta)/\sqrt{2}$, take a sequence of integers $k_j$ such that $\lim_{j\rightarrow\infty}\sin k_j\theta=1$. Thus
\begin{eqnarray*}
&&\lim_{j\rightarrow\infty}\sin(k_j+1)\theta\ge -1/\sqrt{2}\\
&&\lim_{j\rightarrow\infty}\sin(k_j-1)\theta\le 0
\end{eqnarray*}
Then we have
\begin{eqnarray*}
\lim_{j\rightarrow\infty}q_{k_j}(y)&\ge & \frac{1}{\sin\theta}((l-2)(1-\delta)-\sqrt{(l-1)(1-\delta)(d-1+\delta)}/\sqrt{2})\\
&=&\frac{\sqrt{2}}{\sin\theta}((l-2)(1-\delta)/\sqrt{2}-\sqrt{(l-1)(1-\delta)(d-1+\delta)}/2)\\
&>& \frac{\sqrt{2}}{\sin\theta}((l-2)(1-\delta)/\sqrt{2}-\sqrt{(l-1)(1-\delta)(d-1+\delta)})\ge 0
\end{eqnarray*}
by the fact that $l>1$, $\delta<1$ and $d+\delta>1$. So we have $\lim_{j\rightarrow\infty}|q_{k_j}(y)|^{1/k_j}=1$. In summary, we have $\limsup_{k\rightarrow\infty}|q_k(y)|^{1/k}=1$ for $\theta/\pi\in [1/2,3/4]\setminus\mathbb{Q}$. 

If $\theta/\pi\in [3/4,1]\setminus\mathbb{Q}$, further if 
$$
\sqrt{(l-1)(1-\delta)(d-1+\delta)}\ge (l-2)(1-\delta)+\frac{(1-\delta)\sqrt{(l-1)(1-\delta)}}{\sqrt{d-1+\delta}}
$$
take a sequence of integers $k_j$ such that $\lim_{j\rightarrow\infty}\sin(k_j+1)\theta=1$. Thus
\begin{eqnarray*}
&&\lim_{j\rightarrow\infty}\sin k_j\theta=\cos\theta>-1\\
&&\lim_{j\rightarrow\infty}\sin (k_j-1)\theta<1
\end{eqnarray*}
therefore,
\begin{eqnarray*}
&&\lim_{j\rightarrow\infty}q_{k_j}(y)\\
&>&\frac{1}{\sin\theta}\Big(\sqrt{(l-1)(1-\delta)(d-1+\delta)}+ (l-2)(1-\delta)\cos\theta-\frac{(1-\delta)\sqrt{(l-1)(1-\delta)}}{\sqrt{d-1+\delta}}\Big)\\
&\ge &\frac{1}{\sin\theta}\Big(\sqrt{(l-1)(1-\delta)(d-1+\delta)}- (l-2)(1-\delta)-\frac{(1-\delta)\sqrt{(l-1)(1-\delta)}}{\sqrt{d-1+\delta}}\Big)\ge 0
\end{eqnarray*}
where the first proper inequality is by the fact that $\lim_{j\rightarrow\infty}\sin (k_j-1)\theta<1$ and $(1-\delta)\sqrt{(l-1)(1-\delta)}>0$. Thus $\lim_{j\rightarrow\infty}|q_{k_j}|^{1/k_j}=1$. Further if
$$
\sqrt{(l-1)(1-\delta)(d-1+\delta)}< (l-2)(1-\delta)+\frac{(1-\delta)\sqrt{(l-1)(1-\delta)}}{\sqrt{d-1+\delta}}
$$
then take a sequence of integers $k_j$ such that $\lim_{j\rightarrow\infty}\sin k_j\theta=1$. Thus
\begin{eqnarray*}
&&\lim_{j\rightarrow\infty}\sin(k_j+1)\theta=\cos\theta>-1\\
&&\lim_{j\rightarrow\infty}\sin(k_j-1)\theta=\cos\theta>-1
\end{eqnarray*}
Therefore,
\begin{eqnarray*}
&&\lim_{j\rightarrow\infty}q_{k_j}(y)=\frac{1}{\sin\theta}\bigg[ (l-2)(1-\delta)\\
&&+\Big(\sqrt{(l-1)(1-\delta)(d-1+\delta)}-\frac{(1-\delta)\sqrt{(l-1)(1-\delta)}}{\sqrt{d-1+\delta}}\Big)\cos\theta\bigg]\\
&\ge &\frac{1}{\sin\theta}\bigg( (l-2)(1-\delta)-\sqrt{(l-1)(1-\delta)(d-1+\delta)}+\frac{(1-\delta)\sqrt{(l-1)(1-\delta)}}{\sqrt{d-1+\delta}}\bigg)\\
&> &0
\end{eqnarray*}
where the first inequality is by the fact that
\begin{eqnarray*}
&&\Big(\sqrt{(l-1)(1-\delta)(d-1+\delta)}-\frac{(1-\delta)\sqrt{(l-1)(1-\delta)}}{\sqrt{d-1+\delta}}\Big)\cos\theta\\
&\ge &-\Big(\sqrt{(l-1)(1-\delta)(d-1+\delta)}-\frac{(1-\delta)\sqrt{(l-1)(1-\delta)}}{\sqrt{d-1+\delta}}\Big)
\end{eqnarray*} 
Therefore we have $\limsup_{k\rightarrow\infty}|q_k(y)|^{1/k}=1$ for $\theta/\pi\in[3/4,1]\setminus\mathbb{Q}$. 

Next, if $\theta/\pi\in(0,1)\cap\mathbb{Q}$, then the range of $q_k(y)$ is finite for $k\in\mathbb{N}$. We can take a sequence of integers $k_j\nearrow\infty$ such that $\sin k_j\theta=0$, therefore
\begin{eqnarray*}
&&\sin(k_j+1)\theta=\sin\theta\\
&&\sin(k_j-1)\theta=-\sin\theta
\end{eqnarray*} 
and
$$
q_{k_j}(y)=\sqrt{(l-1)(1-\delta)(d-1+\delta)}+\frac{(1-\delta)\sqrt{(l-1)(1-\delta)}}{\sqrt{d-1+\delta}}>0
$$
therefore 
$$
\lim_{j\rightarrow\infty}|q_{k_j}(y)|^{1/k_j}=1
$$
Moreover if $\theta=0$ or $\pi$, then $\cos\theta=\pm 1$,  we have
\begin{eqnarray*}
q_k(1)&=&(k+1)\sqrt{(l-1)(1-\delta)(d-1+\delta)}+k(l-2)(1-\delta)\\
&&-(k-1)\frac{(1-\delta)\sqrt{(l-1)(1-\delta)}}{\sqrt{d-1+\delta}}\\
&>&(k-1)\sqrt{(l-1)(1-\delta)(d-1+\delta)}-(k-1)\frac{\sqrt{(l-1)(1-\delta)}}{\sqrt{d-1+\delta}}\\
&=&(k-1)\frac{\sqrt{(l-1)(1-\delta)}}{\sqrt{d-1+\delta}}(d+\delta-2)\ge 0
\end{eqnarray*}
On the other hand, it is easy to verify by induction that $U_k(-y)=(-1)^kU_k(y)$, thus we have
\begin{eqnarray*}
|q_k(-1)|&=&\Big|(k+1)\sqrt{(l-1)(1-\delta)(d-1+\delta)}-k(l-2)(1-\delta)\\
&&-(k-1)\frac{(1-\delta)\sqrt{(l-1)(1-\delta)}}{\sqrt{d-1+\delta}}\Big|\\
\end{eqnarray*}
which is the absolute value of a linear function of $k$, the cardinality of its range is either 1 or infinity. If the cardinality is infinity, then $|q_k(-1)|>0$ for $k$ large enough. If the cardinality is 1, then we have
$$
\sqrt{(l-1)(1-\delta)(d-1+\delta)}=(l-2)(1-\delta)+\frac{(1-\delta)\sqrt{(l-1)(1-\delta)}}{\sqrt{d-1+\delta}}
$$
which yields
\begin{eqnarray*}
|q_k(-1)|\equiv \sqrt{(l-1)(1-\delta)(d-1+\delta)}+\frac{(1-\delta)\sqrt{(l-1)(1-\delta)}}{\sqrt{d-1+\delta}}>0
\end{eqnarray*}
In summary, we have $\limsup_{k\rightarrow\infty}|q_k(y)|^{1/k}=1$ for $y\in[-1,1]$. 

Next, if $|y|>1$ (same method, see \cite{ABLS}), we have $y=(z+z^{-1})/2$ with $z:=y+\mbox{sign}(y)\sqrt{y^2-1}\notin[-1,1]$. Setting $z:=\mbox{sign}(y)e^\theta$ for some real $\theta$, we have $y=\mbox{sign}(y)\cos(i\theta)$. Therefore if $y>0$ we have that
\begin{eqnarray*}
q_k(y)&=&\sqrt{(l-1)(1-\delta)(d-1+\delta)}\frac{\sin(k+1)i\theta}{\sin i\theta}+(l-2)(1-\delta)\frac{\sin ki\theta}{\sin i\theta}\\
&&-\frac{(1-\delta)\sqrt{(l-1)(1-\delta)}}{\sqrt{d-1+\delta}}\frac{\sin(k-1)i\theta}{\sin i\theta}\\
&=&\sqrt{(l-1)(1-\delta)(d-1+\delta)}\frac{z^{k+1}-z^{-(k+1)}}{z-z^{-1}}+(l-2)(1-\delta)\frac{z^k-z^{-k}}{z-z^{-1}}\\
&&-\frac{(1-\delta)\sqrt{(l-1)(1-\delta)}}{\sqrt{d-1+\delta}}\frac{z^{k-1}-z^{-(k-1)}}{z-z^{-1}}\\
&=:&A(z)z^k+B(z)z^{-k}
\end{eqnarray*}
where $A(z)$ and $B(z)$ do not depend on $k$. We claim $A(z)>0$, to see this,
\begin{eqnarray*}
A(z)&:=& \frac{1}{z^2-1}\Big[\sqrt{(l-1)(1-\delta)(d-1+\delta)}z^2+(l-2)(1-\delta)z\\
&&-\frac{(1-\delta)\sqrt{(l-1)(1-\delta)}}{\sqrt{d-1+\delta}}\Big]\\
&> &\frac{1}{z^2-1}\Big[\sqrt{(l-1)(1-\delta)(d-1+\delta)}+(l-2)(1-\delta)\\
&&-\frac{(1-\delta)\sqrt{(l-1)(1-\delta)}}{\sqrt{d-1+\delta}}\Big]~~~~~~~(\mbox{by~the~fact~that~}z>1)\\
&\ge &0
\end{eqnarray*}
Therefore,
$$
|A(z)||z|^k-|B(z)|\le |q_k(y)|\le |A(z)||z|^k+|B(z)|
$$
Thus $|q_k(y)|^{1/k}\rightarrow |z|=y+\sqrt{y^2-1}$ as $k\rightarrow\infty$. Further if $y<0$, then $z<0$. By the fact that $U_k(-y)=(-1)^kU(y)$, we have
\begin{eqnarray*}
|q_k(y)|&=&\Big|\sqrt{(l-1)(1-\delta)(d-1+\delta)}\frac{\sin(k+1)i\theta}{\sin i\theta}-(l-2)(1-\delta)\frac{\sin ki\theta}{\sin i\theta}\\
&&-\frac{(1-\delta)\sqrt{(l-1)(1-\delta)}}{\sqrt{d-1+\delta}}\frac{\sin(k-1)i\theta}{\sin i\theta}\Big|\\
&=&\Big|\sqrt{(l-1)(1-\delta)(d-1+\delta)}\frac{|z|^{k+1}-|z|^{-(k+1)}}{|z|-|z|^{-1}}-(l-2)(1-\delta)\frac{|z|^k-|z|^{-k}}{|z|-|z|^{-1}}\\
&&-\frac{(1-\delta)\sqrt{(l-1)(1-\delta)}}{\sqrt{d-1+\delta}}\frac{|z|^{k-1}-|z|^{-(k-1)}}{|z|-|z|^{-1}}\Big|\\
&=:&\Big|A(|z|)|z|^k+B(|z|)|z|^{-k}\Big|
\end{eqnarray*}
where 
\begin{equation}\label{eq:A(z)}
A(|z|):=\frac{1}{|z|^2-1}\Big(\sqrt{(l-1)(1-\delta)(d-1+\delta)}|z|^2-(l-2)(1-\delta)|z|-\frac{(1-\delta)\sqrt{(l-1)(1-\delta)}}{\sqrt{d-1+\delta}}\Big)
\end{equation}
and
\begin{equation}\label{eq:B(z)}
B(|z|):=A(|z|^{-1})
\end{equation}
The numerator of $A(|z|)$ has a root $|z|=\sqrt{(l-1)(1-\delta)/(d-1+\delta)}$. However if $l(1-\delta)\le d$, $\sqrt{(l-1)(1-\delta)/(d-1+\delta)}\le 1$, contradicts to the fact that $z\notin[-1,1]$. Therefore we always have $A(|z|)>0$. So by the similar argument as $y>0$, we have $|q_k(y)|^{1/k}\rightarrow |z|$, which provides (\ref{eq:MainLemmaLess}).

For the case that $l(1-\delta)>d$, then $A(|z|)$ in (\ref{eq:A(z)}) for $y<-1$ (hence $z<-1$) has a real root and is $|z_0|=\sqrt{(l-1)(1-\delta)/(d-1+\delta)}>1$. Then $A(|z_0|)=0$ and $B(|z_0|)=A(|z_0|^{-1})\neq 0$, therefore if we assume 
$$
y_0:=-(|z_0|+|z_0|^{-1})/2=\frac{-d-(l-2)(1-\delta)}{2\sqrt{(l-1)(1-\delta)(d-1+\delta)}}
$$
We have
$$
\lim_{k\rightarrow\infty}|q_k(y_0)|^{1/k}=|z_0|^{-1}=\sqrt{\frac{d-1+\delta}{(l-1)(1-\delta)}}
$$ 
which provides (\ref{eq:MainLemmaGreater}).
\end{proof}

\section*{Acknowledgment}

The authors are grateful to the referee for several useful comments and suggestions that have greatly improved the original manuscript. Peng Xu would like to place his sincere gratitude to Dr. Mokshay Madiman for his constant encouragement and valuable help.

%
%
%

\bibliographystyle{amsalpha}

\begin{thebibliography}{A}

\bibitem [1]{AF} D. Aldous, David and J.A. Fill, James Allen, {\em Reversible Markov Chains and Random Walks on Graphs} Unfinished monograph, recompiled 2014, 
available at {\tt http://www.stat.berkeley.edu/$\sim$aldous/RWG/book.html}

\bibitem [2]{ABLS} N. Alon, I. Benjamini, E. Lubetzky and S. Sodin, Non-backtracking random walks mix faster, {\em Commun. Contemp. Math.} {\bf 9} (2007), no. 4, 585--603.

\bibitem [3]{AL} N. Alon and E. Lubetzky, Poisson approximation for non-backtracking random walks, {\em Israel J. Math.} {\bf 174} (2009), 227--252.


\bibitem [4]{AS} N. Alon and J. Spencer, {\em The Probabilistic Method}, Third edition. With an appendix on the life and work of Paul Erd\H{o}s. Wiley-Interscience Series in Discrete Mathematics and Optimization. John Wiley \& Sons, Inc., Hoboken, NJ, 2008.


\bibitem [5] {BH} A.E. Brouwer and W.H. Haemers, {\em Spectra of Graphs}, Universitext. Springer, New York, 2012. xiv+250 pp.

\bibitem [6]{C} F.R.K. Chung, {\em Spectral Graph Theory} CBMS Regional Conference Series in Mathematics, 92. Published for the Conference Board of the Mathematical Sciences, Washington, DC; by the American Mathematical Society, Providence, RI, 1997.

\bibitem [7]{FL} K. Feng and W.-C. Winnie Li, Spectra of hypergraphs and applications, {\em J. Number Theory} {\bf 60} (1996) 1--22.


\bibitem [8]{FH} R. Fitzner and R. van der Hofstad, Non-backtracking random walk. {\em J. Stat. Phys.} {\bf 150} (2013), no. 2, 264--284.

\bibitem [9] {GR} C. Godsil and G. Royle, {\em Algebraic Graph Theory}, Graduate Texts in Mathematics, 207. Springer-Verlag, New York, 2001. xx+439 pp.


\bibitem [10]{KMMNSZZ} F. Krzakala, C. Moore, E. Mossel, Elchanan, J. Neeman, Joe, A. Sly, Allan, L. Zdeborov\'{a} and P. Zhang, Spectral redemption in clustering sparse networks, {\em Proc. Natl. Acad. Sci. USA} {\bf 110} (2013), no. 52, 20935--20940. 

\bibitem [11]{Li} W.-C. Winnie Li, Ramanujan hypergraphs, Ramanujan hypergraphs.
{\em Geom. Funct. Anal.} {\bf 14} (2004), no. 2, 380--399. 

\bibitem [12]{LS} W.-C. Winnie Li and P. Sol\'{e}, Spectra of Regular Graphs and Hypergraphs and Orthogonal Polynomials, {\em Europ. J. Combinatorics} {\bf 17} (1996) 461-477.

\bibitem [13]{L} L. Lov\'{a}sz, Random walks on graphs: a survey, in: Combinatorics, Paul Erd\H{o}s is Eighty, Vo. 2 (ed: D. Mikl\'{o}s, V.T. S\'{o}s, T. Sz\"{o}nyi), J\'{a}nos Bolyai Mathematical Society, Budapest 1996, 353--398.

\bibitem [14]{Murty1} M.R. Murty, Ramanujan graphs, {\em J. Ramanujan Math. Soc.} {\bf 18} (2003), no. 1, 33--52. 

\bibitem [15]{N1} A. Nilli, On the second eigenvalue of a graph, {\em Discrete Math.} {\bf 91} (1991) 207-210.

\bibitem [16] {vLW} J. van Lint and R.M. Wilson, A Course in Combinatorics, Second edition. Cambridge University Press, Cambridge, 2001. xiv+602 pp.

%
%

\end{thebibliography}

\end{document}